\newtheorem{thm}{Theorem}
\newtheorem{cor}[thm]{Corollary}
\newtheorem{lem}[thm]{Lemma}
\newtheorem{prop}[thm]{Proposition}
\newtheorem{defn}[thm]{Definition}
\theoremstyle{definition}
\newtheorem{rem}{Remark}
\newcommand{\rr}{\mathbb{R}}
\newcommand{\nn}{\mathbb{N}}
\newcommand{\ee}{\varepsilon}
\newcommand{\SB}{\mathbf{\Sigma}}
\newcommand{\PB}{\mathbf{\Pi}}
\newcommand{\supp}{\mathrm{supp}}
\begin{document}

\title{Dichotomies of the set of test measures of a Haar-null set}
\author{Pandelis Dodos}
\address{National Technical University of Athens, Faculty of Applied
Sciences, Department of Mathematics, Zografou Campus, 157 80,
Athens, Greece.}
\email{pdodos@math.ntua.gr}

\footnotetext[1]{Research supported by a grant of EPEAEK program ``Pythagoras".}

\maketitle


\begin{abstract}
We prove that if $X$ is a Polish space and $F$ is a face of $P(X)$ with the Baire
property, then $F$ is either a meager or a co-meager subset of $P(X)$. As a
consequence we show that for every abelian Polish group $X$ and every analytic
Haar-null set $A\subseteq X$, the set of test measures $T(A)$ of $A$ is either
meager or co-meager. We characterize the non-locally-compact groups as the ones
for which there exists a closed Haar-null set $F\subseteq X$ with $T(F)$ is meager.
Moreover, we answer negatively a question of J. Mycielski by showing that for every
non-locally-compact abelian Polish group and every $\sigma$-compact subgroup $G$
of $X$ there exists a $G$-invariant $F_\sigma$ subset of $X$ which is neither
prevalent nor Haar-null.
\end{abstract}


\section{Introduction and auxiliary lemmas}

A universally measurable subset $A$ of an abelian Polish group $X$ is called \textit{Haar-null}
if there exists a probability measure $\mu$ on $X$, called a \textit{test measure}
of $A$, such that $\mu(x+A)=0$ for every $x\in X$. This definition is due to J. P. R.
Christensen \cite{C} and extends the notion of a Haar-measure zero set. The same
class of sets has been also considered independently by B. R. Hunt, T. Sauer and
J. A. Yorke in \cite{HSY}. They used the term \textit{shy} instead of Haar-null.
The complements of Haar-null sets are called \textit{prevalent}. In Christensen's
paper \cite{C} a number of important properties of Haar-null sets were established.
In particular, he showed that the class of Haar-null sets is a $\sigma$-ideal, which
in the case of non-locally-compact groups contains all compact sets. On the other
hand, he proved that if $X$ is locally-compact, then Haar-null sets are precisely
the Haar-measure zero sets and so his definition is indeed a generalization.
There are, however, a number of properties of Haar-null sets which differentiate
the locally from the non-locally compact case (see \cite{BL} and the references therein).
An important example is the countable chain condition, which is not satisfied
if $X$ is non-locally-compact (this is due to R. Dougherty for a large class
of abelian groups \cite{Dou} and to S. Solecki in general \cite{S1}).

Note that the test measure of a Haar-null set is not unique. Our motivation for
this paper was to investigate the structure of the set of test measures $T(A)$ of
a Haar-null set $A$. One natural question is about its size. Our first result states
that there are only two extreme possibilities.
\medskip

\noindent \textbf{Theorem A.} \textit{Let $X$ be an abelian Polish group. Then for
every analytic Haar-null set $A\subseteq X$, the set
\[ T(A)=\{\mu\in P(X): \mu(x+A)=0 \text{ for every } x\in X\}\]
is either a meager or a co-meager subset of $P(X)$.}
\medskip

Actually, Theorem A is a consequence of a general property shared by the faces of
$P(X)$. Specifically, we prove the following zero-one law.
\medskip

\noindent \textbf{Theorem B.} \textit{Let $X$ be a Polish space and $F$ a face of
$P(X)$ with the Baire property. Then $F$ is either a meager or a co-meager subset
of $P(X)$.}
\medskip

\noindent The crucial point in the proof of Theorem B is the fact that convex averaging
is open in $P(X)$.

Theorem A justifies the following definition. An analytic Haar-null set $A$ is called
\textit{strongly} Haar-null if $T(A)$ is co-meager. Otherwise it is called \textit{weakly}
Haar-null. In every abelian Polish group there exist strongly and weakly Haar-null
sets. However the existence of a closed weakly Haar-null set characterizes the
non-locally-compact groups.
\medskip

\noindent \textbf{Theorem C.} \textit{Let $X$ be an abelian Polish group. Then $X$
is non-locally-compact if and only if it contains a closed weakly Haar-null set.}
\medskip

\noindent The proof of the above theorem is descriptive set-theoretic and uses the
results of S. Solecki in \cite{S2}.

We also deal with a problem of J. Mycielski. He asked in \cite{My} the following.
If $X$ is a non-locally-compact abelian Polish group, $G$ a countable dense subgroup
of $X$ and $A\subseteq X$ is a $G$-invariant universally measurable set, then is it
true that $A$ is either prevalent or Haar-null? The problem was answered negatively
by R. Dougherty in $\rr^\nn$ (see \cite{Dou}). Using a result of E. Matou\v{s}kov\'{a}
and M. Zelen\'{y} we show that for every non-locally-compact abelian Polish group
$X$ and every $\sigma$-compact subgroup $G$ of $X$, there exists a $G$-invariant
$F_\sigma$ subset of $X$ which is neither prevalent nor Haar-null.

Finally, a measure-theoretic analogue of Theorem A is given on the last section.
It is based on the fact that $T(A)$ is invariant under the group of homeomorphisms
$\mu\to \mu* \delta_x$, where $x\in X$.
\medskip

\noindent \textsc{Notation.} In what follows $X$ will be a Polish space
(additional assumptions will be stated explicitly). By $d$ we denote a compatible
complete metric of $X$. If $x\in X$ and $r>0$, then by $B(x,r)$ we denote the set
$\{y\in X: d(x,y)<r\}$. By $P(X)$ we denote the space of all Borel probability measures
on $X$, while by $M_+(X)$ the space of all positive finite Borel measures. Then
both $P(X)$ and $M_+(X)$ equipped with the weak topology become Polish spaces
(see, for instance, \cite{Pa}). If $d$ is a compatible complete metric
of $X$, then a compatible complete metric of $P(X)$ is the so-called L\'{e}vy
metric $\varrho$, defined by
\[ \rho(\mu,\nu)=\inf \{\delta\geq 0: \mu(A)\leq \nu(A_\delta)+\delta
 \text{ and } \nu(A)\leq \mu(A_\delta)+\delta\} \]
where $A_\delta=\{x\in X:d(x,A)\leq\delta\}$. All balls in $P(X)$ are taken with
respect to the L\'{e}vy metric $\rho$. That is, if $\mu\in P(X)$ and $r>0$,
then by $B(\mu,r)$ we denote the set $\{\nu\in P(X):\rho(\mu,\nu)<r\}$ (from the
context it will be clear whether we refer to a ball in $P(X)$ or in $X$).
Finally, for every $\mu\in P(X)$, by $\supp\mu$ we denote the support of the measure
$\mu$. All the other pieces of notation we use are standard (for more information
we refer to \cite{Kechris}).


\section{Faces of $P(X)$}

Through this section $X$ will be a Polish space. As usual, we say that $F\subseteq P(X)$
is a face of $P(X)$ if $F$ is an extreme convex subset of $P(X)$. For every $t\in (0,1)$
consider the function $T_t:P(X)\times P(X)\to P(X)$, defined by
\[ T_t(\mu,\nu)=t\mu+(1-t)\nu. \]
Clearly every $T_t$ is continuous. The following lemma provides an estimate for their
range.
\begin{lem} \label{l1}
Let $r>0$ and $m,\mu\in P(X)$ be such that $\rho(m,\mu)<r$. Let also $t\in (0,1)$.
Then for every $\nu\in P(X)$ we have that $\rho\big(T_t(\nu,m),\mu)\leq t+r$.
\end{lem}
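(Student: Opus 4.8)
\emph{Proof proposal.} The plan is to verify directly from the definition of the L\'evy metric that $\delta=t+r$ witnesses the desired estimate. First I would record the elementary observation that the set of admissible $\delta$ in the definition of $\rho$ is upward closed: if $\mu(A)\leq\nu(A_{\delta_0})+\delta_0$ (resp.\ $\nu(A)\leq\mu(A_{\delta_0})+\delta_0$) holds for all Borel $A$ and $\delta\geq\delta_0$, then the same holds with $\delta$ in place of $\delta_0$, because $A_{\delta_0}\subseteq A_\delta$. Since $\rho(m,\mu)<r$, this gives
\[ m(A)\leq\mu(A_r)+r \quad\text{and}\quad \mu(A)\leq m(A_r)+r \]
for every Borel set $A$.

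Next, writing $\sigma:=T_t(\nu,m)=t\nu+(1-t)m$, I would check the two inequalities defining $\rho(\sigma,\mu)\leq t+r$ for an arbitrary Borel set $A$. For the first, using $\nu(A)\leq 1$, the bound on $m(A)$ above, the monotonicity $A_r\subseteq A_{t+r}$ (valid since $t>0$), and $1-t\leq 1$,
\[ \sigma(A)=t\nu(A)+(1-t)m(A)\leq t+(1-t)\big(\mu(A_r)+r\big)\leq t+\mu(A_{t+r})+r. \]
For the second, I would discard the $\nu$-part, splitting $m(A_r)=(1-t)m(A_r)+t\,m(A_r)$ and using $m(A_r)\leq 1$ together with $(1-t)m(A_r)\leq(1-t)m(A_{t+r})\leq\sigma(A_{t+r})$:
\[ \mu(A)\leq m(A_r)+r\leq (1-t)m(A_{t+r})+t+r\leq \sigma(A_{t+r})+(t+r). \]
Since both inequalities hold for every Borel $A$, the definition of $\rho$ yields $\rho(T_t(\nu,m),\mu)\leq t+r$.

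I do not expect a genuine obstacle here; the only point requiring a little care is the passage from the strict inequality $\rho(m,\mu)<r$ to the usable estimates at scale $r$, which is exactly what the upward-closedness remark takes care of. Everything else is bookkeeping: the factor $t$ produced by replacing a true measure by a convex combination is absorbed by the trivial bounds $\nu(A)\leq 1$ and $m(A_r)\leq 1$, while enlarging the scale from $r$ to $t+r$ compensates for the $(1-t)$ factors.
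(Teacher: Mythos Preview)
Your argument is correct and follows essentially the same route as the paper's proof: both verify the two L\'evy-metric inequalities directly by bounding $t\nu(A)\leq t$ (respectively $t\,m(A_r)\leq t$), applying $\rho(m,\mu)<r$ to pass between $m$ and $\mu$, and enlarging the scale from $r$ to $t+r$. Your explicit remark on the upward closedness of the admissible $\delta$'s is a nice touch that the paper leaves implicit.
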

\begin{proof}
Let $A\subseteq X$ Borel. Then note that
\begin{eqnarray*}
t\nu(A)+(1-t)m(A)\leq t+m(A) & \leq & t+\mu(A_r)+r\\
& \leq & \mu(A_{t+r})+t+r.
\end{eqnarray*}
Similarly, we have
\begin{eqnarray*}
\mu(A) & \leq & (1-t)\mu(A) + t\mu(A) \leq (1-t)m(A_r)+r+t\\
& \leq & t\nu(A_{t+r}) + (1-t) m(A_{t+r})+t+r.
\end{eqnarray*}
By the above inequalities we get that $\rho(t\nu+(1-t)m,\mu)\leq t+r$.
\end{proof}
The crucial property of $T_t$ is that they are open. This result is due to L. Q. Eifler
\cite{E1}. For the sake of completeness we include a proof.
\begin{prop} \label{p2}
For every $t\in (0,1)$, the function $T_t$ is open.
\end{prop}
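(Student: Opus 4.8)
The plan is to verify openness on a neighbourhood basis. Since the sets $B(\mu_0,\epsilon)\times B(\nu_0,\epsilon)$ form a basis of neighbourhoods at $(\mu_0,\nu_0)$, it suffices to show: for all $\mu_0,\nu_0\in P(X)$ and $\epsilon>0$ there is $\delta>0$ with $B(m_0,\delta)\subseteq T_t\big(B(\mu_0,\epsilon)\times B(\nu_0,\epsilon)\big)$, where $m_0:=T_t(\mu_0,\nu_0)=t\mu_0+(1-t)\nu_0$. Equivalently, given $m\in P(X)$ with $\rho(m,m_0)<\delta$, I must split $m=t\mu+(1-t)\nu$ with $\rho(\mu,\mu_0)<\epsilon$ and $\rho(\nu,\nu_0)<\epsilon$.

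The mechanism I would use is a transport-plus-disintegration argument. First, turn L\'evy-smallness into a coupling: by Strassen's theorem (relating the L\'evy--Prokhorov distance to couplings) there is $\gamma\in P(X\times X)$ with first marginal $m$, second marginal $m_0$, and $\gamma(\{(x,y):d(x,y)>\delta\})\leq\delta$; if one prefers to keep the argument self-contained, such a $\gamma$ can be built by hand from a Borel partition of $X$ into pieces of diameter $<\delta$, using the definition of $\rho$ to match up masses. Second, split $m_0$ canonically: since $t\mu_0\leq m_0$, the Radon--Nikodym derivative $\phi:=d(t\mu_0)/dm_0$ is Borel with $0\leq\phi\leq1$ and $(1-t)\nu_0=(1-\phi)m_0$. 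Third, push this splitting back through $\gamma$: disintegrating $\gamma$ over its second marginal as $\gamma(dx\,dy)=\kappa(y,dx)\,m_0(dy)$ (which is legitimate since $X$ is Polish), set
\[
\mu:=\frac{1}{t}\int_X\kappa(y,\cdot)\,\phi(y)\,m_0(dy),\qquad
\nu:=\frac{1}{1-t}\int_X\kappa(y,\cdot)\,(1-\phi(y))\,m_0(dy).
\]
A direct check gives $\mu,\nu\in P(X)$ (the total masses are $\tfrac1t\int\phi\,dm_0=1$ and $\tfrac1{1-t}\int(1-\phi)\,dm_0=1$) and $t\mu+(1-t)\nu=\int_X\kappa(y,\cdot)\,m_0(dy)=m$, which is exactly the required relation.

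It then remains to estimate $\rho(\mu,\mu_0)$ and $\rho(\nu,\nu_0)$. For Borel $B\subseteq X$ one has $t\mu(B)=\int_{X\times X}\mathbf 1_B(x)\phi(y)\,\gamma(dx\,dy)$ and, using that the second marginal of $\gamma$ is $m_0$, $t\mu_0(B)=\int_{X\times X}\mathbf 1_B(y)\phi(y)\,\gamma(dx\,dy)$. Splitting the first integral over $\{d(x,y)\leq\delta\}$ and its complement, noting $\mathbf 1_B(x)\leq\mathbf 1_{B_\delta}(y)$ on the former and that the latter carries $\gamma$-mass at most $\delta$, yields $\mu(B)\leq\mu_0(B_\delta)+\delta/t$, and symmetrically $\mu_0(B)\leq\mu(B_\delta)+\delta/t$; hence $\rho(\mu,\mu_0)\leq\delta/t$, and likewise $\rho(\nu,\nu_0)\leq\delta/(1-t)$. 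Choosing $\delta:=\tfrac12\,\epsilon\,t(1-t)$ finishes the proof. These last estimates are routine and are, morally, the content of Lemma \ref{l1} (convex averaging moves a measure by a controlled amount); the real work is concentrated in manufacturing the near-diagonal coupling $\gamma$ — the Strassen step, which I expect to be the main obstacle — together with the measurable bookkeeping of the disintegration. The single conceptual point worth isolating is that the $m_0$-decomposition $\phi\,m_0+(1-\phi)m_0$ can be transported through the kernel $\kappa$ so as to split $m$ in precisely the proportions $t:(1-t)$.
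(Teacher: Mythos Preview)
Your argument is correct and takes a genuinely different route from the paper's. The paper proves openness \emph{sequentially}: given $w_n\to w=t\mu+(1-t)\nu$, it first establishes the claim that whenever $\nu\leq\mu$ in $M_+(X)$ and $\mu_n\to\mu$, one can find (along a subsequence) $\nu_k\leq\mu_{n_k}$ with $\nu_k\to\nu$; this is proved by approximating the Radon--Nikodym derivative $d\nu/d\mu$ in $L^1(\mu)$ by functions $f_k\in C_b(X)$ with $0\le f_k\le 1$ and setting $\nu_k=f_k\mu_{n_k}$. Applying the claim to $t\mu\leq w$ produces $m_k\leq w_{n_k}$ with $m_k\to t\mu$, and a short algebraic adjustment (to correct for $m_k(X)\neq t$) yields the desired decomposition $w_{n_k}=t\mu_k+(1-t)\nu_k$.

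Your approach is instead \emph{quantitative}: Strassen's theorem turns $\rho(m,m_0)<\delta$ into a near-diagonal coupling $\gamma$, and then the Radon--Nikodym splitting of $m_0$ is pushed through the disintegration kernel to split $m$. This buys you an explicit modulus of openness ($\delta=\tfrac12\epsilon\, t(1-t)$ works), at the cost of invoking Strassen and disintegration as black boxes; the paper's proof is softer and more self-contained, needing only the density of $C_b(X)$ in $L^1(\mu)$, but gives no quantitative information and requires repeated passage to subsequences. One small quibble: your parenthetical remark that the final estimates are ``morally the content of Lemma~\ref{l1}'' is not quite right---Lemma~\ref{l1} bounds $\rho(T_t(\nu,m),\mu)$ in terms of $\rho(m,\mu)$, which is a different (and easier) statement than controlling the pieces of a splitting; your estimate is closer in spirit to a one-sided Strassen bound.
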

\begin{proof}
Let us introduce some notation. given $\mu,\nu\in M_+(X)$ we write $\nu\leq \mu$
if $\nu(A)\leq \mu(A)$ for every $A\subseteq X$ Borel (note that $\nu\leq \mu$ implies
that $\nu\ll\mu$). Moreover, for every $\mu\in M_+(X)$ and $f\in C_b(X)$ by $f\mu$
we denote the measure defined by $f\mu(A)=\int_a f(x)d\mu(x)$.
\medskip

\noindent \textsc{Claim.} \textit{Let $\mu,\nu\in M_+(X)$ with $\nu\leq\mu$ and $(\mu_n)$
in $M_+(X)$ with $\mu_n\to\mu$. Then there exist a sequence $(\nu_k)$ in $M_+(X)$ and a
subsequence $(\mu_{n_k})$ of $(\mu_n)$ such that $\nu_k\to \nu$ and $\nu_k\leq \mu_{n_k}$
for every $k$.}
\medskip

\noindent \textit{Proof.} Let $g=d\nu/d\mu\in L^1(\mu)$. As $\nu\leq\mu$ we have that
$0\leq g(x)\leq 1$ $\mu$-a.e. So we may find a sequence $(f_k)$ in $C_b(X)$ such that
$f_k\to g$ in $L^1(\mu)$ and $0\leq f_k(x)\leq 1$ for every $x\in X$ and every $k$.
It follows that $f_k\mu\to g\mu=\nu$. Moreover, for fixed $k$, we have $f_k\mu_n\to f_k\mu$
as $n\to\infty$. Pick a subsequence $(\mu_{n_k})$ of $(\mu_n)$ such that
$f_k\mu_{n_k}\to\nu$ and set $\nu_k=f_k\mu_{n_k}$. \hfill $\lozenge$
\medskip

\noindent Fix $0<t<1$. Let $w=t\mu+(1-t)\nu$, where $\mu,\nu\in P(X)$. It is enough to
show that for every sequence $(w_n)$ in $P(X)$ with $w_n\to w$, there exists a
subsequence $(w_{n_k})$ of $(w_n)$ and sequences $(\mu_k)$, $(\nu_k)$ in $P(X)$
such that $\mu_k\to\mu$, $nu_k\to\nu$ and $w_{n_k}=t\mu_k+(1-t)\nu_k$. So let
$(w_n)$ be one. As $t\mu\leq w$, by the above claim, there exist a sequence
$(m_k)$ in $M_+(X)$ and a subsequence $(w_{n_k})$ of $(w_n)$ such that $m_k\to t\mu$
and $m_k\leq w_{n_k}$. Set $t_k=m_k(X)$. Then $t_k\to t$. By passing to further
subsequences if necessary, we may assume that the sequence $(t_k)$ is monotone.
Without loss of generality suppose that $t_k\uparrow t$ (the case $t_k\downarrow t$
is similar). Set $m'_k=m_k/t_k$ and $\nu_k=(w_{n_k}-m_k)/(1-t_k)$. Then
$m'_k,\nu_k\in P(X)$ and
\begin{eqnarray*}
w_{n_k} & = & t_k m'_k +(1-t_k)\nu_k = (t-t-t_k)m'_k+ (1-t+t-t_k) \nu_k\\
& = & t\Big(m'_k - \frac{t-t_k}{t} m'_k+ \frac{t-t_k}{t} \nu_k\Big) +(1-t)\nu_k.
\end{eqnarray*}
Put
\[ \mu_k= m'_k- \frac{t-t_k}{t} m'_k + \frac{t-t_k}{t} \nu_k.\]
Then $\mu_k\in P(X)$. As $\mu_k\to\mu$ and $\nu_k\to\nu$ the proof is completed.
\end{proof}
\begin{rem} \label{r1}
Note that the above proof is valid for all metrizable spaces, as long as we restrict
ourselves to Radon probability measures. In this direction the more general result
is contained in \cite{E2}.
\end{rem}
Finally, we need the following elementary property of continuous, open functions.
\begin{lem} \label{l3}
Let $Z, Y$ be Baire spaces and $f:Z\to Y$ a continuous, open function. Then
$f^{-1}(G)$ is co-meager in $Z$ for every $G\subseteq Y$ co-meager.
\end{lem}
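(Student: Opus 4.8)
The plan is to show that if $G \subseteq Y$ is co-meager, then $f^{-1}(G)$ contains the complement of a countable union of nowhere dense sets in $Z$. Write $Y \setminus G = \bigcup_{n} N_n$ with each $N_n$ nowhere dense in $Y$; then $f^{-1}(Y \setminus G) = \bigcup_n f^{-1}(N_n)$, so it suffices to show that $f^{-1}(N)$ is nowhere dense in $Z$ whenever $N$ is nowhere dense in $Y$. Since $f$ is continuous, $f^{-1}(\overline{N})$ is closed, and it contains $f^{-1}(N)$, so it is enough to check that $f^{-1}(C)$ has empty interior in $Z$ whenever $C$ is closed nowhere dense in $Y$.

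Suppose toward a contradiction that $f^{-1}(C)$ has nonempty interior, so there is a nonempty open $U \subseteq Z$ with $U \subseteq f^{-1}(C)$. Here is where openness of $f$ enters: $f(U)$ is a nonempty open subset of $Y$, and $f(U) \subseteq C$. But $C$ is nowhere dense, hence has empty interior, so it cannot contain a nonempty open set — contradiction. Therefore $f^{-1}(C)$ is nowhere dense, hence $f^{-1}(N)$ is nowhere dense for every nowhere dense $N$, and consequently $f^{-1}(Y \setminus G)$ is meager in $Z$.

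Finally, since $Z$ is a Baire space and $f^{-1}(Y \setminus G)$ is meager, its complement $f^{-1}(G)$ is co-meager in $Z$, as required. (One should note that being a Baire space is exactly what guarantees that the complement of a meager set is ``large'', i.e. co-meager in the sense that it is not itself meager on any nonempty open set; in fact $f^{-1}(G)$ literally contains a dense $G_\delta$ if $G$ does, but the weaker statement suffices here.)

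There is no real obstacle: the only nontrivial ingredient is the one-line use of openness to pass from $f(U)$ having nonempty interior (automatic, since $f$ is open and $U$ is open) to a contradiction with $C$ being nowhere dense. Everything else is bookkeeping with the definitions of meager and nowhere dense, together with continuity of $f$ to ensure preimages of closed sets are closed.
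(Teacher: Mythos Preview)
Your argument is correct and is the standard one: the paper in fact omits the proof entirely, labelling the lemma as an ``elementary property'' and stating it without justification, so there is nothing to compare against. One small remark: the Baire hypothesis on $Z$ (and on $Y$) is not actually used anywhere in your proof --- the complement of a meager set is co-meager by definition, with no appeal to the Baire property needed --- so your final paragraph invoking it is superfluous, though harmless.
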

We are ready to give our main result of this section.
\begin{thm} \label{t4}
Let $X$ be a Polish space and $F$ a face of $P(X)$ with the Baire property. Then
$F$ is either a meager or a co-meager subset of $P(X)$.
\end{thm}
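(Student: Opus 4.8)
The plan is to prove that if $F$ is non-meager then it is co-meager; this suffices, since then $F$ is either meager or co-meager. The first ingredient is purely algebraic: for every $t\in(0,1)$ the map $T_t$ satisfies
\[ T_t^{-1}(F)=F\times F, \]
and it is here that the face (and not merely convex) hypothesis is used. The inclusion $F\times F\subseteq T_t^{-1}(F)$ is just convexity of $F$, while $T_t^{-1}(F)\subseteq F\times F$ says exactly that if $t\mu+(1-t)\nu\in F$ with $t\in(0,1)$ then $\mu,\nu\in F$, which is the definition of an extreme convex set. Taking complements, $T_t^{-1}(P(X)\setminus F)=\big((P(X)\setminus F)\times P(X)\big)\cup\big(P(X)\times(P(X)\setminus F)\big)$.

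Now suppose $F$ is non-meager. Since $F$ has the Baire property, it is co-meager on some non-empty open ball $B=B(\mu_0,r_0)$ of $P(X)$, i.e. $B\setminus F$ is meager in $P(X)$. Fix $t\in(0,r_0)$. By Proposition \ref{p2} the map $T_t\colon P(X)\times P(X)\to P(X)$ is continuous and open, and $P(X)\times P(X)$ is Polish, hence a Baire space; so Lemma \ref{l3} applies and yields that $T_t^{-1}(B\setminus F)$ is meager in $P(X)\times P(X)$.

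The geometric step is to fit a full ``tube'' into $T_t^{-1}(B)$. Choose $\delta\in(0,r_0-t)$ and set $B'=B(\mu_0,\delta)$. I claim $P(X)\times B'\subseteq T_t^{-1}(B)$: for any $\nu\in P(X)$ and any $m\in B'$, Lemma \ref{l1} applied with this $m$, with $\mu=\mu_0$, and with $r=\delta$, gives $\rho\big(T_t(\nu,m),\mu_0\big)\le t+\delta<r_0$, so $T_t(\nu,m)\in B$. Intersecting this tube with $T_t^{-1}(P(X)\setminus F)$ and using the first paragraph,
\[ (P(X)\setminus F)\times B'\subseteq\big(P(X)\times B'\big)\cap\big((P(X)\setminus F)\times P(X)\big)\subseteq T_t^{-1}(B)\cap T_t^{-1}(P(X)\setminus F)=T_t^{-1}(B\setminus F), \]
so $(P(X)\setminus F)\times B'$ is meager in $P(X)\times P(X)$.

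It remains to transfer this back to $F$ via the Kuratowski--Ulam theorem \cite{Kechris}. The set $(P(X)\setminus F)\times B'$ has the Baire property, being the intersection of the open set $P(X)\times B'$ with $(P(X)\setminus F)\times P(X)$, and the latter has the Baire property as the preimage of $P(X)\setminus F$ under a coordinate projection. Its vertical section over a point $\mu$ is $B'$ when $\mu\notin F$ and $\emptyset$ otherwise; since $B'$ is a non-empty open set it is non-meager, so the set of $\mu$ whose section is meager is precisely $F$. By Kuratowski--Ulam, meagerness of $(P(X)\setminus F)\times B'$ forces $F$ to be co-meager, i.e. $P(X)\setminus F$ is meager, as wanted. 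The one point needing care is the matching of scales: the face identity $T_t^{-1}(F)=F\times F$ together with the estimate of Lemma \ref{l1} are exactly what let us trap the cylinder $(P(X)\setminus F)\times B'$ inside a set already known to be meager; the rest is routine Baire-category bookkeeping.
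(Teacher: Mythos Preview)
Your proof is correct and follows essentially the same route as the paper's: both use the face identity $T_t^{-1}(F)=F\times F$, the openness of $T_t$ (Proposition~\ref{p2}) together with Lemma~\ref{l3} to transport Baire-category information, and Lemma~\ref{l1} to trap a tube $P(X)\times B'$ inside $T_t^{-1}(B)$. The only cosmetic differences are that the paper restricts $T_t$ to $P(X)\times B(\mu,r)$ and argues directly that $f^{-1}(F\cap Y)=F\times G$ is co-meager there, whereas you keep the full domain, pass to complements, and make the final product-to-factor step explicit via Kuratowski--Ulam; the paper's concluding ``whence $F$ must be co-meager'' is exactly this same step left implicit.
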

\begin{proof}
Assume that $F$ is not meager. We will show that in this case $F$ is actually co-meager
and this will finish the proof. As $F$ is not meager, there exist $\mu\in P(X)$ and
$\lambda>0$ such that $F$ is co-meager in $B(\mu,\lambda)$. Pick $a,r>0$ small enough
such that $a+r<\lambda/2$. Set $Z=P(X)\times B(\mu,r)$ and $Y=B(\mu,\lambda)$.
Consider the restriction of $T_a$ on $Z$. Denote the restriction by $f$. Clearly $f$
remains open and continuous. By Lemma \ref{l1} we have that $f(Z)\subseteq Y$. Put
$W=F\cap Y$. By Lemma \ref{l3}, we get that $f^{-1}(W)$ is co-meager in $Z$.

Set $G=F\cap B(\mu,r)$ (note that $G$ is co-meager in $(B(\mu,r)$). As $F$ is an
extreme convex subset of $P(X)$, for every $\nu\in P(X)$ and every $m\in B(\mu,r)$
we have that $T_a(\nu,m)\in F$ if and only if $\nu,m\in F$. It follows that
\[ f^{-1}(W)=F\times G,\]
whence $F$ must be co-meager and the proof is completed.
\end{proof}


\section{The test measures of an analytic Haar-null set}

Let $X$ be an abelian Polish group (locally or non-locally compact). For every
universally measurable set $A\subseteq X$ we put
\[ T(A)=\{\mu\in P(X):\mu(x+A)=0 \text{ for every } x\in X\}.\]
Namely $T(A)$ is the set of test measures of $A$. We have the following estimate
for the complexity of $T(A)$ (see \cite{D}, Lemma 4).
\begin{lem} \label{l5}
 If $A\subseteq X$ is analytic, then $T(A)$ is a co-analytic subset of $P(X)$.
\end{lem}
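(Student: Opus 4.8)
The plan is to prove that the complement $P(X)\setminus T(A)$ is analytic. Since $A$ is analytic it is universally measurable, so $\mu(x+A)$ is well defined for all $\mu\in P(X)$ and $x\in X$, and
\[ P(X)\setminus T(A)=\bigl\{\mu\in P(X):\exists x\in X,\ \mu(x+A)>0\bigr\}=\pi_{P(X)}(R),\]
where $R=\{(\mu,x)\in P(X)\times X:\mu(x+A)>0\}$ and $\pi_{P(X)}$ denotes the projection. Since the projection of an analytic set is analytic, it suffices to show that $R$ is analytic. Writing $\tau_y(z)=y+z$ for translation, one has $\mu(x+A)=\bigl((\tau_{-x})_*\mu\bigr)(A)$, and the map $\Phi\colon P(X)\times X\to P(X)$, $\Phi(\mu,x)=(\tau_{-x})_*\mu$, is continuous (it is the translation action of the Polish group $X$ on $P(X)$; Borel-ness would already suffice). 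Hence $R=\Phi^{-1}(\mathcal{A})$, where $\mathcal{A}=\{\nu\in P(X):\nu(A)>0\}$, and the whole problem reduces to showing that $\mathcal{A}$ is analytic.

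Writing $\mathcal{A}=\bigcup_{n\geq1}\mathcal{A}_{1/n}$ with $\mathcal{A}_r=\{\nu\in P(X):\nu(A)>r\}$, it is enough to prove that each $\mathcal{A}_r$ $(0<r<1)$ is analytic. Fix, by the usual representation of analytic sets, a Polish space $Z$ and a closed set $F\subseteq X\times Z$ with $A=\pi(F)$, where now $\pi\colon X\times Z\to X$ is the projection. The key step is the equivalence, valid for every $\nu\in P(X)$,
\[ \nu(A)>r\quad\Longleftrightarrow\quad \exists\,\lambda\in M_+(X\times Z)\ \bigl(\supp\lambda\subseteq F,\ \pi_*\lambda\leq\nu,\ \lambda(X\times Z)>r\bigr),\]
where $\pi_*\lambda\leq\nu$ means $\pi_*\lambda(B)\leq\nu(B)$ for every Borel $B\subseteq X$. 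For $(\Leftarrow)$: if such a $\lambda$ exists then $\supp\lambda\subseteq F$ forces $\lambda$ to vanish off $F$, while $\pi^{-1}(X\setminus A)$ is disjoint from $F$ (as $A=\pi(F)$), so $\pi_*\lambda$ is concentrated on $A$; together with $\pi_*\lambda\leq\nu$ and the $\nu$-measurability of $A$ this gives $\nu(A)\geq\pi_*\lambda(A)=\lambda(X\times Z)>r$. For $(\Rightarrow)$: if $\nu(A)>r$, then by tightness and inner regularity of $\nu$ together with the universal measurability of $A$ we may pick a compact $C\subseteq A$ with $\nu(C)>r$; by the Jankov--von Neumann uniformization theorem, applied to the closed set $\{(a,w)\in X\times F:\pi(w)=a\}$, the continuous surjection $\pi\!\upharpoonright\!F\colon F\to A$ has a universally measurable section $g\colon A\to F$, and then $\lambda:=g_*(\nu|_C)$ is concentrated on $g(C)\subseteq F$ (so $\supp\lambda\subseteq F$), satisfies $\pi_*\lambda=\nu|_C\leq\nu$, and has $\lambda(X\times Z)=\nu(C)>r$.

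Finally, the set
\[ S=\bigl\{(\nu,\lambda)\in P(X)\times M_+(X\times Z):\supp\lambda\subseteq F,\ \pi_*\lambda\leq\nu,\ \lambda(X\times Z)>r\bigr\}\]
is Borel: $P(X)$ and $M_+(X\times Z)$ are Polish in the weak topology; the condition $\lambda(X\times Z)>r$ is open, since the total mass is continuous; the condition $\supp\lambda\subseteq F$, i.e.\ $\lambda\bigl((X\times Z)\setminus F\bigr)=0$, is closed, since $(X\times Z)\setminus F$ is open and $\lambda\mapsto\lambda(U)$ is lower semicontinuous for open $U$; and the condition $\pi_*\lambda\leq\nu$ is Borel --- reducing to closed sets, taking complements, and testing on a countable base of $X$ closed under finite unions, it becomes a countable intersection of sets of the form $\bigl\{\lambda(X\times Z)-\lambda(\pi^{-1}(W))\leq 1-\nu(W)\bigr\}$, each of which is Borel as a comparison of two upper semicontinuous functions of $(\nu,\lambda)$. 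Consequently $\mathcal{A}_r=\pi_{P(X)}(S)$ is analytic, hence so is $\mathcal{A}$, hence so is $R=\Phi^{-1}(\mathcal{A})$, and therefore $P(X)\setminus T(A)=\pi_{P(X)}(R)$ is analytic; that is, $T(A)$ is co-analytic.

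The only real content is the equivalence in the middle paragraph, and the main obstacle there is obtaining it in a form that manifestly projects onto something Borel: one has to witness $\nu(A)>r$ by \emph{lifting} a measure on $A$ to a measure supported on the \emph{closed} set $F$ through a measurable section of $\pi\!\upharpoonright\!F$, rather than by directly exhibiting a compact $K\subseteq A$ of positive $\nu$-measure --- the latter brings in the family of all compact subsets of $A$, which for analytic (indeed already for some Borel) $A$ need not be analytic. If one prefers, the analyticity of $\{\nu\in P(X):\nu(A)>r\}$ for analytic $A$ is a standard fact that may simply be quoted, rendering the middle two paragraphs unnecessary.
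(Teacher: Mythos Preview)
Your argument is correct. The reduction to showing that $\{\nu\in P(X):\nu(A)>r\}$ is analytic is the natural one, and your lifting characterization via measures supported on a closed $F\subseteq X\times Z$ projecting onto $A$ is a clean way to witness this; the verification that $S$ is Borel is fine (testing $\pi_*\lambda\leq\nu$ on complements of a countable base closed under finite unions does suffice, since every closed set is a decreasing limit of such complements).

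Note, however, that the paper does not prove this lemma at all: it simply quotes it from \cite{D}, Lemma~4. The argument there (and the ``standard fact'' you allude to at the end) is essentially the same reduction: one uses that for analytic $A$ the evaluation $\nu\mapsto\nu(A)$ is an $\mathcal{A}$-measurable function of $\nu$, i.e.\ $\{\nu:\nu(A)>r\}$ is analytic --- this is recorded, for instance, in \cite{Kechris}, \S29 (the Kondô--Tugué style results on measures of analytic sets). So your proof is a self-contained elaboration of exactly the result the paper is citing; the measure-lifting paragraph is more detail than is strictly needed, but it is correct and makes the analyticity visible without invoking the black-box theorem.
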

Note the for every universally measurable set $A$, the set $T(A)$ is a face of $P(X)$.
Hence by Theorem \ref{t4} and Lemma \ref{l5}, we get the following topological dichotomy
for the set of test measures of an analytic Haar-null set.
\begin{cor} \label{c6}
Let $X$ be an abelian Polish group. Then for every analytic Haar-null set $A\subseteq X$,
the set $T(A)$ is either a meager or a co-meager subset of $P(X)$.
\end{cor}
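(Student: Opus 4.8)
The plan is to derive Corollary \ref{c6} as an immediate consequence of the two preceding results, Theorem \ref{t4} and Lemma \ref{l5}, together with the elementary observation already recorded in the text that $T(A)$ is a face of $P(X)$. So the proof is essentially a bookkeeping argument: verify the hypotheses of Theorem \ref{t4} for the particular convex set $F = T(A)$.

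First I would check that $T(A)$ is genuinely a face of $P(X)$. Convexity is clear, since if $\mu(x+A) = 0$ and $\nu(x+A) = 0$ for all $x$, then $(t\mu + (1-t)\nu)(x+A) = 0$ for all $x$ and all $t \in [0,1]$. For the extremality (face) property, suppose $t\mu + (1-t)\nu \in T(A)$ with $t \in (0,1)$ and $\mu, \nu \in P(X)$; then for every $x \in X$ we have $t\mu(x+A) + (1-t)\nu(x+A) = 0$, and since both summands are nonnegative and the coefficients are strictly positive, $\mu(x+A) = \nu(x+A) = 0$, i.e. $\mu, \nu \in T(A)$. (Strictly speaking one should note that $x+A$ is universally measurable, so the quantities $\mu(x+A)$ make sense; this is immediate since $A$ is analytic, hence universally measurable, and translation is a homeomorphism.)

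Next I would invoke Lemma \ref{l5}: since $A$ is analytic, $T(A)$ is co-analytic, hence in particular it is universally measurable and therefore has the Baire property. This is the point where the analyticity hypothesis on $A$ is actually used — without some regularity of $A$ we would have no handle on the descriptive complexity of $T(A)$, and the Baire property is precisely what Theorem \ref{t4} requires as input.

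With both hypotheses in hand — $T(A)$ is a face of $P(X)$ and has the Baire property — Theorem \ref{t4} applies directly and yields that $T(A)$ is either meager or co-meager in $P(X)$. I do not anticipate any real obstacle here: the entire content of the corollary has been pushed into Theorem \ref{t4} (whose proof exploits the openness of convex averaging, Proposition \ref{p2}) and Lemma \ref{l5}; the only thing to be careful about is spelling out why $T(A)$ is a face, which amounts to the short nonnegativity argument above. One minor remark worth including: the Haar-nullity of $A$ is what guarantees $T(A) \neq \emptyset$, so that the dichotomy is not vacuous, but it plays no role in establishing the meager/co-meager alternative itself, which holds for $T(A)$ of any analytic set.
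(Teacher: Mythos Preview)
Your proposal is correct and follows essentially the same route as the paper: observe that $T(A)$ is a face of $P(X)$, use Lemma \ref{l5} to get that $T(A)$ is co-analytic and hence has the Baire property, and then apply Theorem \ref{t4}. Your write-up is in fact more detailed than the paper's, which simply states these ingredients without spelling out the nonnegativity argument for extremality.
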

The following definition is motivated by Corollary \ref{c6}.
\begin{defn} \label{d7}
An analytic Haar-null set $A\subseteq X$ is called strongly Haar-null if $T(A)$ is
co-meager. Otherwise, it is called \textit{weakly Haar-null}.
\end{defn}
Under the terminology of the above definition, Corollary \ref{c6} states that every
analytic weakly Haar-null set is necessarily tested by meager many measures. We
should point out, however, that even if $T(A)$ may be small (in the topological sense),
it is always dense in $P(X)$. Indeed, as has been indicated by Christensen (see \cite{C}),
for every $x\in X$ and $r>0$ there exists an $\mu\in T(A)$ such that
$\rho(\mu,\delta_x)<r$. But the set of convex combinations of Dirac measures is dense
in $P(X)$. So, using the properties of the L\'{e}vy metric, we can easily
verify the density of $T(A)$.

As has been shown in \cite{D}, every analytic and non-meager Haar-null set is necessarily
weakly Haar-null (and so strongly Haar-null sets are necessarily meager). On the other
hand, we have the following proposition (for a proof see \cite{D}, Proposition 5).
\begin{prop} \label{p8}
Let $X$ be an abelian Polish group and $A\subseteq X$ a $\sigma$-compact Haar-null
set. Then $A$ is strongly Haar-null.
\end{prop}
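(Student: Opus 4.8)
The plan is to prove that a $\sigma$-compact Haar-null set $A = \bigcup_n K_n$, with each $K_n$ compact, is strongly Haar-null, i.e. that $T(A)$ is co-meager in $P(X)$. By Corollary \ref{c6}, $T(A)$ is either meager or co-meager, so it suffices to show $T(A)$ is \emph{not} meager; equivalently, since $T(A)$ is a face, it suffices to show $T(A)$ is co-meager in some ball $B(\mu,\lambda)$. In fact I would aim for the stronger and cleaner assertion that $T(A)$ is co-meager in all of $P(X)$, by analyzing each $K_n$ separately.

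First I would fix a compact Haar-null set $K$ and consider the set $T_x(K) = \{\mu \in P(X) : \mu(x+K) = 0\}$ for a fixed $x \in X$. Since $K$ is compact, $x+K$ is closed, and the map $\mu \mapsto \mu(x+K)$ is upper semicontinuous on $P(X)$ with respect to the weak topology (this is the standard portmanteau fact: $\mu_n \to \mu$ implies $\limsup_n \mu_n(C) \le \mu(C)$ for closed $C$). Hence $\{\mu : \mu(x+K) \ge \varepsilon\}$ is closed for each $\varepsilon > 0$, so $T_x(K) = \bigcap_m \{\mu : \mu(x+K) < 1/m\}$ is a $G_\delta$ set. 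The key point is then to show each such $G_\delta$ set is \emph{dense} in $P(X)$: given any $\nu \in P(X)$ and any neighborhood, I want a nearby measure charging $x+K$ arbitrarily little. Here I would invoke the density argument already sketched in the paper after Definition \ref{d7}: since $K$ is Haar-null, for every $y$ and every $r > 0$ there is $\mu \in T(K)$ with $\rho(\mu, \delta_y) < r$; taking convex combinations of such measures and using that finite convex combinations of Dirac measures are dense in $P(X)$, together with the convexity of $T(K)$ (it is a face, hence convex) and the continuity/openness of convex averaging from Proposition \ref{p2}, one approximates any $\nu$ by elements of $T(K) \subseteq T_x(K)$. So each $T_x(K)$ is a dense $G_\delta$, hence co-meager.

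Next I would handle the quantifier over $x \in X$. We have $T(K) = \bigcap_{x \in X} T_x(K)$, an uncountable intersection, so I cannot conclude co-meagerness directly. The standard fix is to use the fact that $T(K)$ is actually a $G_\delta$ in its own right when $K$ is $\sigma$-compact — or rather, to reduce the uncountable intersection to a countable one. Since $x + K$ depends on $x$ only through the compact set, and $x \mapsto \mu(x+K)$ has suitable regularity, one shows that $\{\mu : \sup_{x} \mu(x+K) = 0\}$ equals $\{\mu : \sup_{x \in D} \mu(x+K) = 0\}$ for a countable dense $D \subseteq X$: indeed if $\mu(x+K) \ge \varepsilon$ then by outer regularity there is an open $U \supseteq x+K$ with $\mu(U) $ close to $\mu(x+K)$, and for $x'$ near $x$ (with $x' \in D$) we still have $x'+K \subseteq U$ by compactness of $K$, but this gives $\mu(x'+\widetilde K)$ large only if we thicken $K$ slightly — so I would instead argue via: $\mu(x+K) \le \mu((x'+K)_\delta)$ for $d(x,x')$ small, and use that $(x'+K)_\delta \downarrow x'+K$ to transfer positivity from $x$ to a nearby $x' \in D$ at the cost of shrinking $\varepsilon$. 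This yields $T(K) = \bigcap_{x \in D} T_x(K)$, a countable intersection of dense $G_\delta$ sets, hence co-meager. Finally, $T(A) = \bigcap_n T(K_n)$ is a countable intersection of co-meager sets, hence co-meager, so $A$ is strongly Haar-null.

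The main obstacle is the middle step: passing from the pointwise statement "each $T_x(K)$ is co-meager" to "$T(K)$ is co-meager," i.e. taming the uncountable intersection over all translates. The density of each $T_x(K)$ is essentially handed to us by Christensen's approximation result and the openness of convex averaging (Proposition \ref{p2}), so that part is routine; the $G_\delta$-ness of $T_x(K)$ is just upper semicontinuity of $\mu \mapsto \mu(C)$ for closed $C$. But reducing to a countable dense $D$ requires a genuine (if elementary) argument exploiting the compactness of $K$ together with the structure of the L\'evy metric — specifically that for $x'$ close to $x$, the closed set $x+K$ is contained in a $\delta$-thickening of $x'+K$, combined with the fact that $\mu((x'+K)_\delta) \to \mu(x'+K)$ as $\delta \downarrow 0$ by countable additivity. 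I expect this compactness-plus-regularity bookkeeping to be the only place where $\sigma$-compactness (rather than mere analyticity) of $A$ is essential, matching the fact that Corollary \ref{c6} alone does not force non-meager Haar-null sets to be strongly Haar-null.
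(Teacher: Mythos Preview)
The paper does not prove Proposition~\ref{p8}; it refers to \cite{D}, Proposition~5. Your overall architecture---show $T(K)$ is a dense $G_\delta$ for each compact $K$, then take the countable intersection over $n$---is exactly right, and the density of $T(K)$ is indeed already handled by the remarks following Definition~\ref{d7}. The gap is in your reduction of the uncountable intersection $\bigcap_{x\in X}T_x(K)$ to $\bigcap_{x\in D}T_x(K)$ for a countable dense $D$: this equality is simply false. Take $X=\rr$, $K=\{0\}$, $D=\mathbb{Q}$; then $\bigcap_{q\in\mathbb{Q}}T_q(K)$ consists of all measures giving mass zero to every rational singleton, while $T(K)$ consists of the non-atomic measures, and $\delta_\pi$ lies in the former but not the latter. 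Your limiting argument breaks precisely where you need it: from $\mu(x+K)\le\mu((x'+K)_\delta)$ with $d(x,x')<\delta$ you cannot let $\delta\downarrow 0$ while keeping $x'\in D$ fixed, because the constraint $d(x,x')<\delta$ forces $x'\to x$, and the limit recovers $\mu(x+K)$, not $\mu(x'+K)$ for any fixed $x'\in D$.

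The correct route to the $G_\delta$ claim bypasses $D$ entirely and shows directly that each $F_m=\{\mu:\sup_{x\in X}\mu(x+K)\ge 1/m\}$ is closed in $P(X)$, so that $T(K)=P(X)\setminus\bigcup_m F_m$ is $G_\delta$. If $\mu_n\in F_m$ and $\mu_n\to\mu$, pick $x_n$ with $\mu_n(x_n+K)\ge 1/m$. By Prokhorov's theorem the convergent sequence $(\mu_n)$ is uniformly tight, so there is a compact $L\subseteq X$ with $\mu_n(L)>1-1/(2m)$ for all $n$; then $(x_n+K)\cap L\neq\varnothing$, forcing $x_n\in L-K$, which is compact. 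Passing to a subsequence, $x_n\to x$, and for each $\delta>0$ eventually $x_n+K\subseteq (x+K)_\delta$, whence $\mu((x+K)_\delta)\ge\limsup_n\mu_n((x+K)_\delta)\ge 1/m$; letting $\delta\downarrow 0$ gives $\mu(x+K)\ge 1/m$, so $\mu\in F_m$. This is where compactness of $K$ genuinely enters, confirming your intuition about the role of $\sigma$-compactness.
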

By the above proposition, it follows that in locally-compact groups closed Haar-null
sets are strongly Haar-null.

The situation in non-locally-compact groups is quite different. In particular, we will
show that in every non-locally-compact abelian Polish group $X$, there exists a closed
weakly Haar-null set $F\subseteq X$. To this end we mention that the collection $F(X)$
of all closed subsets of $X$ equipped with the Effros-Borel structure is a standard
Borel space (see \cite{Kechris}, page 75). Denote by $\mathcal{H}$ the collection
of all closed Haar-null sets and by $\mathcal{H}_s$ the collection of all closed
strongly Haar-null sets. Clearly $\mathcal{H}\supseteq \mathcal{H}_s$. We claim that
the inclusion is strict. Indeed, observe that, as noted by Solecki in \cite{S2}
(page 211), the set
\[ H=\big\{ (F,\mu)\in F(X)\times P(X): \mu(x+F)=0 \ \forall x\in X\big\}\]
is $\PB^1_1$. It follows that the set
\[ \mathcal{H}_s=\big\{F\in F(X): \{\mu\in P(X):(F,\mu)\in H\} \text{ is co-meager}\big\}\]
is $\PB^1_1$ too (see \cite{Kechris}, page 244). But, as has been proved by Solecki,
$\mathcal{H}$ is $\SB^1_1$-hard, hence not $\PB^1_1$ (for the definition of $\SB^1_1$-hard
sets see \cite{Kechris} or \cite{S2}). So the inclusion is strict.

Summarizing, we get the following corollary which provides another characterization
of non-locally-compact abelian Polish groups via properties of Haar-null sets.
\begin{cor} \label{c9}
Let $X$ be an abelian Polish group. Then $X$ is non-locally-compact if and only if
there exists a closed weakly Haar-null set $F\subseteq X$.
\end{cor}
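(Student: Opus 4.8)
The plan is to prove the two implications separately; the ``only if'' direction is immediate from Proposition \ref{p8}, while the ``if'' direction is exactly the descriptive-set-theoretic computation carried out just above the statement. For the ``only if'' direction I would argue contrapositively: assume $X$ is locally compact. A locally compact Polish group is $\sigma$-compact, and hence so is every closed subset $F\subseteq X$. If such an $F$ is Haar-null, it is a $\sigma$-compact Haar-null set, so Proposition \ref{p8} forces $F$ to be strongly Haar-null. Thus a locally compact group contains no closed weakly Haar-null set, which is precisely the assertion that the existence of a closed weakly Haar-null set implies non-local-compactness.

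For the ``if'' direction, assume $X$ is non-locally-compact; the goal is to produce a closed Haar-null $F$ with $T(F)$ not co-meager. I would organise this through the bookkeeping indicated before the statement. Identify closed subsets of $X$ with their codes in the standard Borel space $F(X)$ (with the Effros-Borel structure), and recall Solecki's observation (see \cite{S2}, page 211) that $H=\{(F,\mu)\in F(X)\times P(X):\mu(x+F)=0\text{ for all }x\in X\}$ is $\PB^1_1$. Applying the category quantifier (see \cite{Kechris}, page 244) to $H$ then gives that $\mathcal H_s=\{F\in F(X):\{\mu:(F,\mu)\in H\}\text{ is co-meager}\}$ is again $\PB^1_1$; since $\{\mu:(F,\mu)\in H\}=T(F)$ and a co-meager subset of the Baire space $P(X)$ is nonempty, $\mathcal H_s$ is exactly the set of codes of closed strongly Haar-null sets. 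On the other hand, Solecki has shown that for non-locally-compact $X$ the set $\mathcal H$ of codes of closed Haar-null sets is $\SB^1_1$-hard, hence not $\PB^1_1$. Since $\mathcal H_s\subseteq\mathcal H$, with $\mathcal H_s$ being $\PB^1_1$ and $\mathcal H$ not, the inclusion is proper, and any $F\in\mathcal H\setminus\mathcal H_s$ is a closed Haar-null set with $T(F)$ not co-meager, that is, a closed weakly Haar-null set.

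The step I expect to be the real obstacle is external to the corollary itself: it is Solecki's theorem that $\mathcal H$ is $\SB^1_1$-hard in the non-locally-compact case (\cite{S2}), together with the verification that $H$ is $\PB^1_1$ and the permanence of $\PB^1_1$ under the category quantifier. Granting those inputs, the deduction is only a couple of lines. The one internal point I would make sure to state cleanly is that ``$T(F)$ co-meager'' and ``$F$ strongly Haar-null'' genuinely describe the same closed sets; this rests on the observation above that a co-meager set is nonempty, and it is what lets the $\PB^1_1$ bound on the category-quantifier set be read as a bound on $\mathcal H_s$. The topological dichotomy of Corollary \ref{c6} is then what makes the resulting set $F$ deserve the name ``weakly Haar-null''.
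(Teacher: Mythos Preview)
Your argument is correct and follows exactly the paper's approach: Proposition \ref{p8} for the locally compact case, and the descriptive-set-theoretic comparison of $\mathcal H_s$ (which is $\PB^1_1$ via the category quantifier applied to Solecki's $\PB^1_1$ set $H$) with $\mathcal H$ (which is $\SB^1_1$-hard by Solecki) for the non-locally-compact case. One expository slip: you have interchanged the labels ``if'' and ``only if''---the contrapositive argument via Proposition \ref{p8} establishes the implication ``closed weakly Haar-null set $\Rightarrow$ non-locally-compact'', which is the \emph{if} direction, while the Solecki computation gives the \emph{only if} direction.
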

\begin{rem} \label{r2}
Clearly the crucial property of $T(A)$ we have used in order to prove Corollary \ref{c6},
is that $T(A)$ has the Baire property. If $A\subseteq X$ is co-analytic, then we can easily
verify that $T(A)$ is $\PB^1_2$. Under the standard axioms of set theorem, $\PB^1_2$
do not necessarily have the property of Baire. However, under any other additional
axiom, capable of establishing that $\PB^1_2$ have the Baire property (such as
$\SB^1_1$-determinacy or Martin's Axiom and the negation of Continuum Hypothesis),
Corollary \ref{c6} is valid for co-analytic sets. Moreover, it can be easily checked
that under Projective Determinacy the dichotomy is valid for every projective set.
\end{rem}


\section{Sets invariant under $K_\sigma$ subgroups}

In \cite{MZ}, E. Matou\v{s}kov\'{a} and M. Zelen\'{y} (using methods introduced by
S. Solecki in \cite{S1}) proved the following.
\begin{thm} \label{t10}
Let $X$ be a non-locally-compact abelian Polish group. Then there exist closed
non-Haar-null sets $A, B\subseteq X$ such that the set $(x+A)\cap B$ is compact
for every $x\in X$.
\end{thm}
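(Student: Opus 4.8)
In a locally compact group the theorem is easy: one takes $A=B$ to be any compact neighbourhood of $0$; it is non-Haar-null (every neighbourhood of $0$ is, by a Lindel\"of argument, since a countable family of its translates covers $X$), and $(x+A)\cap B$ is then a closed subset of the compact set $B$. Hence the substance of the theorem lies in the non-locally-compact case, where, by Christensen's theorem, every precompact set is Haar-null, so $A$ and $B$ are forced to be non-precompact. The plan is to first treat the model group $X=\rr^\nn$ (the argument is identical for $X=\mathbb{Z}^\nn$) by an explicit construction, and then transfer to an arbitrary non-locally-compact abelian Polish group using Solecki's analysis in \cite{S1}. For $X=\rr^\nn$ take
\[ A=\{x\in\rr^\nn:\ 0\le x_1\le x_2\le x_3\le\cdots\}, \qquad B=\{y\in\rr^\nn:\ 0\ge y_1\ge y_2\ge y_3\ge\cdots\}, \]
the cone of non-negative non-decreasing sequences and the cone of non-positive non-increasing sequences. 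Both are closed; under the self-homeomorphism $(x_n)_n\mapsto(x_1,x_2-x_1,x_3-x_2,\dots)$ of $\rr^\nn$ they correspond to $[0,\infty)^\nn$ and $(-\infty,0]^\nn$, so in particular they are not precompact, consistent with what Christensen's theorem forces for non-Haar-null sets.

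\emph{Compactness of the intersection.} Let $y\in(x+A)\cap B$. From $y\in B$ we get $y_n\le y_1\le0$ for all $n$; from $y-x\in A$ we get $0\le(y-x)_1\le(y-x)_n$, hence $y_n\ge x_n$, for all $n$. So $(x+A)\cap B$ is contained in $\{y\in\rr^\nn:\ x_n\le y_n\le0\ \forall n\}$; this set is empty if some $x_n>0$, and otherwise equals the compact box $\prod_n[x_n,0]$. Being closed, $(x+A)\cap B$ is thus compact for every $x\in X$.

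\emph{$A$ and $B$ are non-Haar-null.} Let $\mu\in P(X)$. Since $X$ is Polish, $\mu$ is tight, so $\mu(K)>1/2$ for some compact $K\subseteq X$; being compact, $K\subseteq\prod_n[-M_n,M_n]$ for suitable $M_n\ge0$. Put $x^{*}_1=-M_1$ and $x^{*}_{n+1}=x^{*}_n-(M_{n+1}+M_n)$. For every $y\in K$ one has $(y-x^{*})_1=y_1+M_1\ge0$ and $(y-x^{*})_{n+1}-(y-x^{*})_n=(y_{n+1}-y_n)+(M_{n+1}+M_n)\ge0$, so $y-x^{*}\in A$; hence $K\subseteq x^{*}+A$ and $\mu(x^{*}+A)\ge\mu(K)>0$. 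Since $\mu$ was arbitrary, $A$ has no test measure, i.e.\ $A$ is non-Haar-null; the symmetric choice $x^{**}_1=M_1$, $x^{**}_{n+1}=x^{**}_n+(M_{n+1}+M_n)$ does the same for $B$.

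\emph{The general case, and the main obstacle.} For an arbitrary non-locally-compact abelian Polish group there are no coordinates, and the real work — here following the methods of \cite{S1} — is to manufacture a substitute for the axes of $\rr^\nn$: a sequence $(v_n)$ in $X$ with $v_n\to0$ together with an appropriate set of admissible ``coefficients'', out of which one forms closed cones $A$, $B$ having the three properties used above: (i) $A,B$ are not precompact; (ii) every compact subset of $X$ lies in a single translate of $A$, and likewise of $B$ — this is exactly what drives the tightness argument; and (iii) the opposite ``monotonicities'' of the two cones confine $(x+A)\cap B$ to a compact set. I expect that building such a scheme inside an arbitrary non-locally-compact $X$ out of the mere non-total-boundedness of the neighbourhoods of $0$, and then checking that the resulting cones remain closed and non-Haar-null, is the crux of the argument; this is precisely the point at which Solecki's techniques from \cite{S1} are needed.
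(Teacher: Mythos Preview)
The paper does not prove Theorem~\ref{t10}; it quotes it from Matou\v{s}kov\'{a} and Zelen\'{y} \cite{MZ}. What the paper does reveal (inside the proof of Lemma~\ref{l11}) is the shape of the \cite{MZ} construction: a direct build in an \emph{arbitrary} non-locally-compact group, using a dense sequence $(s_n)$, finite approximating sets $Q_k$, and carefully chosen perturbations $g^m_k$, $\tilde g^m_k$ so that
\[
A=\bigcap_{m\ge1}\bigcup_{k\ge1}\overline{B(g^m_k+Q_k,\delta_m)},\qquad
B=\bigcap_{m\ge1}\bigcup_{k\ge1}\overline{B(\tilde g^m_k+Q_k,\delta_m)},
\]
with a separation property $(P)$ forcing $(x+A)\cap B$ to sit inside finitely many small balls at each scale. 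There is no reduction to a model group; the metric geometry is done in $X$ itself.

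Your $\rr^\nn$ argument is correct and pleasant: the monotone cones $A$, $B$ are closed, the box bound $x_n\le y_n\le 0$ gives compactness of $(x+A)\cap B$, and the tightness trick with the shift $x^{*}$ shows every compact set lies in a single translate of $A$ (resp.\ $B$), so neither is Haar-null. But the proposal has a genuine gap at the general case. You do not carry out a construction in an arbitrary $X$; you only describe what such a construction would need to achieve and write ``I expect\ldots this is precisely the point at which Solecki's techniques from \cite{S1} are needed.'' That is an outline, not a proof. Moreover, the phrase ``transfer to an arbitrary non-locally-compact abelian Polish group'' suggests pulling the $\rr^\nn$ example across, but there is no general continuous homomorphism between $X$ and $\rr^\nn$ (in either direction) to transport non-Haar-nullness or the compact-intersection property; Solecki's work \cite{S1} provides intrinsic constructions, not a transfer principle. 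The actual \cite{MZ} proof---whose skeleton you can see in Lemma~\ref{l11}---is of a rather different flavour from your cones: it is a nested intersection of unions of balls around finite configurations, with the separation condition $(P)$ doing the work your opposite monotonicities did in $\rr^\nn$. To complete your write-up you would need either to reproduce that construction or to supply a genuinely new one valid in every non-locally-compact abelian Polish group.
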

In what follows, by $A$ and $B$ we denote the sets obtained from Theorem \ref{t10}.
We will use the following notation. For any set $S\subseteq X$ we put
\[ I(S)=\bigcup_{x\in S} \big( (x+A)\cap B\big).\]
We have the following fact.
\begin{lem} \label{l11}
If $K\subseteq X$ is compact, then $I(K)$ is compact.
\end{lem}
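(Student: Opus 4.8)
The plan is to realise $I(K)$ as a single set operation, reduce compactness to a uniform version of Theorem \ref{t10}, and isolate the one genuinely delicate point.

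First I would rewrite $I(K)$. A point $y$ lies in $I(K)$ precisely when $y\in B$ and $y-x\in A$ for some $x\in K$, i.e. when $y\in B$ and $y\in K+A$; hence
\[ I(K)=(K+A)\cap B. \]
Since $K$ is compact and $A$ is closed, $K+A$ is closed in $X$ (if $x_n+a_n\to z$ with $x_n\in K$, pass to $x_n\to x\in K$, so $a_n\to z-x\in A$), and therefore $I(K)$ is closed. As $X$ is Polish it remains to show $I(K)$ is compact, which I would do by extracting convergent subsequences.

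Next, let $(y_n)$ be a sequence in $I(K)$ and write $y_n=x_n+a_n$ with $x_n\in K$, $a_n\in A$, $y_n\in B$; passing to a subsequence, $x_n\to x\in K$. If $(a_n)$ had a convergent subsequence $a_{n_k}\to a$, then $y_{n_k}\to x+a$, and since $A$ and $B$ are closed we would get $x+a\in(x+A)\cap B\subseteq I(K)$, finishing the proof. So everything reduces to the relative compactness of $\{a_n:n\in\nn\}$; here $a_n\in A\cap(B-x_n)\subseteq A\cap(B-L)$ with $L=\{x_n:n\}\cup\{x\}$ compact, and $A\cap(B-L)=\bigcup_{z\in L}\big(((z+A)\cap B)-z\big)$ — a set of the same shape as $I(L)$, so this step is genuinely self-referential and needs an extra input. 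A cleaner equivalent formulation: fixing a symmetric open neighbourhood $V$ of $0$ in $X$ and covering $K$ by finitely many translates $c_1+V,\dots,c_m+V$ with $c_j\in K$, one checks that $y\in(x+A)\cap B$ with $x\in c_j+V$ forces $y-c_j\in A+\overline{V}$, whence
\[ I(K)\subseteq\bigcup_{j=1}^{m}\big(c_j+A'\big)\cap B,\qquad A':=\overline{A+V}\supseteq A. \]
Thus it would suffice to find a single neighbourhood $V$ of $0$ with $(z+A')\cap B$ compact for every $z\in X$: then $I(K)$ is a closed subset of a finite union of compact sets, hence compact.

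The main obstacle is exactly this passage from $A$ itself to a genuine neighbourhood $A'\supseteq A$. Theorem \ref{t10} supplies only the ``$A'=A$'' case, and there is no formal reason why a decreasing family of closed sets with compact intersection (here $(z+A')\cap B$ as $V$ shrinks) should contain a compact member. My plan to overcome this is to return to the construction of $A$ and $B$ in \cite{MZ} (built, in turn, on Solecki's method from \cite{S1}) and extract from it the uniform compactness above — equivalently, the fact that the continuous map $A\times B\to X$, $(a,b)\mapsto b-a$, is proper, since one checks directly that properness of this map is equivalent to $(K+A)\cap B$ being compact for every compact $K\subseteq X$. (If, as is plausible, the argument of \cite{MZ} already yields $A,B$ with $(C+A)\cap B$ compact for every compact $C$, then the lemma is immediate from the first paragraph.) Granting the uniform version, the reduction above completes the proof.
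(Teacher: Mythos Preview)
Your opening matches the paper: $I(K)=(K+A)\cap B$, and closedness follows because $K$ compact plus $A$ closed gives $K+A$ closed. You are also right that compactness cannot be read off the bare statement of Theorem~\ref{t10} and requires the actual construction in \cite{MZ}. But your proposal stops exactly where the real work begins: you announce the plan to ``return to the construction of $A$ and $B$ in \cite{MZ}'' and then simply grant yourself the conclusion. That is the gap --- what is missing is precisely the content of the lemma. Note too that your two proposed routes do not help close it: the $A'$ condition (a single neighbourhood $V$ with $(z+\overline{A+V})\cap B$ compact for all $z$) is strictly stronger than needed and is not established anywhere (and with $\delta_m\to 0$ in \cite{MZ} there is no reason a fixed $V$ should survive all levels $m$), while the properness reformulation is, as you yourself observe, \emph{equivalent} to the lemma, so invoking it is circular.

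For contrast, the paper bypasses both detours and proves total boundedness of $I(K)$ directly from the \cite{MZ} data. There $A=\bigcap_m\bigcup_k\overline{B(g^m_k+Q_k,\delta_m)}$ and $B=\bigcap_m\bigcup_k\overline{B(\tilde g^m_k+Q_k,\delta_m)}$, and the construction enforces a separation property $(P)$ which yields: whenever $d(x,s_n)<\delta_m/2$ (with $(s_n)$ the fixed dense sequence), the set $(x+A)\cap B$ is contained in a finite union of balls of radius at most $3\delta_m/2$, indexed by $k<n$. Given $r>0$, choose $m$ with $\delta_m<r$, cover $K$ by finitely many balls $B(s_{n_i},\delta_m/2)$, and conclude that $I(K)$ is covered by finitely many balls of radius $<2r$. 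This is the step your write-up is missing.
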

\begin{proof}
First observe that $I(K)$ is closed. Indeed, note that $I(K)=(K+A)\cap B$. Then,
as $K$ is compact and $A$ is closed, we get that $K+A$ is closed, whence so is
$I(K)=(K+A)\cap B$.

So it is enough to show that $I(K)$ is totally bounded. We will need certain
facts from the construction of $A$ and $B$ made in \cite{MZ}. By $d$ we denote
a compatible complete translation invariant metric of $X$.

Fix $S=(s_n)$ a dense countable subset of $X$ and $(Q_k)$ an increasing sequence
of finite subsets of $X$ such that $\bigcup_k Q_k$ is dense in $X$. Fix also sequences
$(\ee_m)$ and $(\delta_m)$ or real numbers such that $\sum_{i>m} \ee_i<\delta_m/2$
and that for every $m$ the satisfy the following fact proved in \cite{MZ}:
\textit{For any finite sets $F_1$ and $F_2$, there exists $g\in B(0,\ee)$
such that $\mathrm{dist}(F_1,g+F_2)\geq \delta$}. Note that $\delta_m\to 0$.

Matou\v{s}kov\'{a} and Zelen\'{y}, by induction, constructed sequences $(g^m_k)$
and $(\tilde{g}^m_k)$ which, among other things, satisfy the following
(crucial for our considerations) property.
\[ (P) \ \ \ \forall n \text{ and } \forall i,j \text{ with } i,j\geq n
\text{ we have } \mathrm{dist}(s_n+g^m_i+Q_i, \tilde{g}^m_j+Q_j) \geq 3\delta_m. \]
Then they defined
\[ A=\bigcap_{m\geq 1} \bigcup_{k\geq 1} \overline{B(g^m_k+Q_k, \delta_m)} \ \text{ and } \ B=\bigcap_{m\geq 1} \bigcup_{k\geq 1} \overline{B(\tilde{g}^m_k+Q_k, \delta_m)}. \]

Let $r>0$ arbitrary. Pick $m$ such that $\delta_m<r$. From the compactness of $K$,
pick $l$ and $(s_{n_i})_{i=1}^l\subset S$ such that
\[ \bigcup_{i=1}^l B(s_{n_i}, \delta_m/2)\supseteq K. \]
If $x\in X$ and $s_n\in S$ with $d(s_n,x)<\delta_m/2$, then, as shown in \cite{MZ},
from property $(P)$ we have
\[ (x+A)\cap B\subset \Big( \bigcup_{k=1}^{n-1} \overline{B(s_n+g^m_k+Q_k, 3\delta_m/2)}
\Big) \cup \Big( \bigcup_{k=1}^{n-1} \overline{B(\tilde{g}^m_k+Q_k,\delta_m)}\Big).\]
It follows that
\[ I(K)\subseteq \bigcup_{i=1}^l \Big( \Big( \bigcup_{k=1}^{n-1}
\overline{B(s_{n_i}+g^m_k+Q_k, 3\delta_m/2)} \Big) \cup \Big(
\bigcup_{k=1}^{n-1} \overline{B(\tilde{g}^m_k+Q_k,\delta_m)}\Big) \Big) .\]
So the set $I(K)$ can be covered by finitely many balls of radii $2\delta_m<2r$. As $r$
was arbitrary, we get that $I(K)$ is totally bounded and the proof is completed.
\end{proof}
\begin{prop} \label{p12}
Let $X$ be non-locally-compact abelian Polish group and $G$ a $\sigma$-compact subgroup
of $X$. Then there exists a $G$-invariant $F_\sigma$ subset $F$ of $X$ such that $F$
is neither prevalent nor Haar-null. In particular, this holds if $G$ is a countable
dense subgroup of $X$.
\end{prop}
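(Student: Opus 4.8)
The plan is to take $F=G+A$, where $A$ and $B$ are the closed non-Haar-null sets furnished by Theorem \ref{t10}, and to verify directly that this set has all the required properties. Write $G=\bigcup_n K_n$ with $(K_n)$ an increasing sequence of compact sets (if $G$ is a countable dense subgroup this is trivial, which settles the ``in particular'' part). Then $F=\bigcup_n(K_n+A)$, and since each $K_n+A$ is closed, being the sum of a compact and a closed subset of the topological group $X$, the set $F$ is $F_\sigma$, hence Borel and in particular universally measurable. Moreover $F$ is $G$-invariant: as $G$ is a subgroup we have $g+G=G$, so $g+F=g+G+A=G+A=F$ for every $g\in G$. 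Finally, since $0\in G$ we have $A\subseteq F$, and $A$ is not Haar-null by Theorem \ref{t10}, so $F$ is not Haar-null either.

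It remains to show that $F$ is not prevalent, i.e.\ that $X\setminus F$ is not Haar-null; for this it suffices to show that the Borel set $B\setminus F$ is not Haar-null. The key observation is that
\[ B\cap F=\bigcup_{g\in G}\big((g+A)\cap B\big)=\bigcup_n I(K_n), \]
which by Lemma \ref{l11} is a countable union of compact sets. Since $X$ is non-locally-compact, every compact subset of $X$ is Haar-null and the Haar-null sets form a $\sigma$-ideal (see \cite{C}), so $B\cap F$ is Haar-null. Now $B=(B\cap F)\cup(B\setminus F)$ and $B$ is not Haar-null by Theorem \ref{t10}; as the first piece is Haar-null, the second piece $B\setminus F=B\setminus(B\cap F)$ cannot be Haar-null. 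Therefore $X\setminus F\supseteq B\setminus F$ is not Haar-null, so $F$ is not prevalent, and the proof is complete.

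I do not expect a serious obstacle here: the argument is short once the right set is identified, so the real content is the choice $F=G+A$ together with the realization that Theorem \ref{t10} (through the compactness of the sets $(x+A)\cap B$) and Lemma \ref{l11} conspire to make $B\cap F$ genuinely small --- namely $\sigma$-compact, hence Haar-null precisely because $X$ is non-locally-compact --- which forces $B$ to push a non-Haar-null part outside $F$. The only routine points to keep in mind throughout are that all the sets involved ($F$, $B\cap F$, $B\setminus F$, $X\setminus F$) are Borel, so that the notion of Haar-nullity applies to each of them, and the standard fact that a compact set plus a closed set is closed in a topological group.
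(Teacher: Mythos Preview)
Your proof is correct and uses the same ingredients as the paper (Theorem \ref{t10} and Lemma \ref{l11}), but it is organized more directly. The paper defines both $C_1=G+A$ and $C_2=G+B$, notes that each is $G$-invariant, $F_\sigma$, and non-Haar-null, and then argues by contradiction: if both were prevalent, so would be $C_1\cap C_2$, but
\[ C_1\cap C_2\subseteq\bigcup_{n,m}\big(K_m+I(K_n-K_m)\big), \]
which is $\sigma$-compact by Lemma \ref{l11} and hence Haar-null. Thus at least one of $C_1,C_2$ works, without saying which. You instead fix $F=G+A$ from the start and compare it directly with $B$: since $B\cap F=\bigcup_n I(K_n)$ is $\sigma$-compact (again by Lemma \ref{l11}) and $B$ is non-Haar-null, the piece $B\setminus F$ must be non-Haar-null, so $F$ is not prevalent. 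Your route avoids the auxiliary set $C_2$, the translation by $K_m$, and the contradiction wrapper, and it actually identifies a specific witness $F=G+A$; the paper's symmetric formulation, on the other hand, makes the role of the two sets $A,B$ from Theorem \ref{t10} more visibly interchangeable.
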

\begin{proof}
Put $G=\bigcup_n K_n$, where each $K_n$ is compact. Let $A$ and $B$ be as in Theorem
\ref{t10}. Define
\[ C_1=\bigcup_{x\in G} (x+A) \ \text{ and } \ C_2=\bigcup_{x\in G} (x+B).\]
Note that for every $n$ the sets $K_n+A$ and $K_n+B$ are closed. Also observe that
$C_1=\bigcup_n K_n +A$ and $C_2=\bigcup_n K_n+B$. So $C_1$ and $C_2$ are $F_\sigma$.
In addition, from the fact that $G$ is a subgroup of $X$, we get that both
$C_1$ and $C_2$ are $G$-invariant.

As any possible translate of a non-Haar-null set is non-Haar-null, we have that $C_1$
and $C_2$ are non-Haar-null. We claim that at least one of them is not prevalent.
Indeed, suppose that both $C_1$ and $C_2$ were prevalent. So $C_1\cap C_2$ would
be prevalent too. But observe that
\begin{eqnarray*}
C_1\cap C_2 & = & \Big( \bigcup_{x\in G} (x+A)\Big) \cap
\Big( \bigcup_{y\in G} (y+B)\Big)\\
& = & \bigcup_{n,m} \bigcup_{x\in K_n} \bigcup_{y\in K_m} (x+A)\cap (y+B)\\
& = & \bigcup_{n,m} \bigcup_{x\in K_n} \bigcup_{y\in K_m} y+\big( (x-y+A)\cap B\big)\\
& \subset & \bigcup_{n,m} K_m + I(K_n-K_m).
\end{eqnarray*}
Clearly the set $K_n-K_m$ is compact. By Lemma \ref{l11}, the set $I(K_n-K_m)$
is compact too, whence so is the set $K_m+I(K_n-K_m)$. But in non-locally-compact
groups, compact sets are Haar-null. It follows that $C_1\cap C_2$ is Haar-null
and we derive a contradiction. Hence at least one of them is not prevalent,
as claimed.
\end{proof}
\begin{rem} \label{r3}
Note that if $G$ is countable, then the proof of Proposition \ref{p12} is considerably
simpler (in particular, Lemma \ref{l11} is completely superfluous). However,
Proposition \ref{p12} shows that invariance under bigger subgroups is not sufficient to
establish the desired dichotomy. For instance, if $X$ is a separable Banach space
and $(X_n)$ an increasing sequence of finite-dimensional subspaces with
$\bigcup_n X_n$ dense in $X$, then we may let $G=\bigcup_n X_n$ and still provide
a counterexample for the dichotomy.
\end{rem}


\section{On the measure-theoretic structure of $T(A)$}

By Lemma \ref{l5}, for every $A\subseteq X$ analytic, the set $T(A)$ is a universally
measurable subset of $P(X)$. So it is natural to wonder about the measure-theoretic
structure of $T(A)$. If $X$ is locally-compact, then it is easy to see that there exists
an $M\in P(P(X))$ (namely $M$ is a measure on measures) such that $M(T(A))=1$
for every $A\subseteq X$ universally measurable. Indeed, let $h$ be the Haar measure
and $H$ be the subset of $P(X)$ consisting of all probability measures $\mu$ absolutely
continuous with respect to $h$. So if $M\in P(P(X))$ is any measure on measures
supported in $H$, then $M(T(A))=1$ for every universally measurable
Haar-null set $A\subseteq X$.

In the non-locally-compact case the situation is different. As M. B. Stinchcombe
observe in \cite{St}, for every $M\in P(P(X))$ there exists a $\sigma$-compact
set $K\subseteq X$ such that $M(\{\mu:\mu(K)=0\})=0$. Hence for every $M\in P(P(X))$
there exists a Haar-null set $K\subseteq X$ for which $M(T(A))=0$. Nevertheless,
we will show that in the non-locally-compact case there exists a measure-theoretic
analogue of Corollary \ref{c6}. So in what follows we will assume that $X$ is
non-locally-compact. As before, $d$ is a translation invariant metric of $X$.

Let us introduce some notation. For every $x\in X$ and every $\mu\in P(X)$ define
the probability measure $\mu_x$ by $\mu_x(A)=\mu(A-x)$ for every Borel set
$A\subseteq X$. That is $\mu_x=\mu*\delta_x$ (note that $\supp\mu_x=x+\supp\mu$).
So every $x\in X$ gives rise to the function $g_x:P(X)\to P(X)$ defined by
$g_x(\mu)=\mu_x$. Observe that the family $(g_x)_{x\in X}$ is an uncountable
group of homeomorphisms of $P(X)$ (actually, $G$ is a group of isometries of
$(P(X),\rho)$). Also note that for every $A\subseteq X$ universally measurable,
the set $T(A)$ is $G$-invariant.
\begin{lem} \label{l13}
Let $\mu\in P(X)$ be such that $\supp\mu$ is compact. Then the orbit $G\mu$
of $\mu$ is uncountable.
\end{lem}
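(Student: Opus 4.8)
The plan is to show that if the orbit $G\mu=\{\mu_x:x\in X\}$ were countable, then $X$ itself would be a countable union of compact sets, contradicting the standing assumption that $X$ is non-locally-compact (a Polish group which is $\sigma$-compact is locally compact, by Baire category). So suppose $G\mu=\{\mu_{x_n}:n\in\nn\}$ for some sequence $(x_n)$ in $X$. The key observation is that the map $x\mapsto\mu_x$ is not far from injective on the relevant scale: since $\supp\mu_x=x+\supp\mu$, two translates $\mu_x$ and $\mu_y$ are equal only if $x+\supp\mu=y+\supp\mu$, i.e. only if $x-y$ lies in the (closed) set of periods $P=\{z\in X:z+\supp\mu=\supp\mu\}$ of the compact set $\supp\mu$. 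Thus the fibres of $x\mapsto\mu_x$ are cosets of a closed subgroup $P_0\supseteq P$ (in fact $P_0=\{z:\mu_z=\mu\}$, which is a closed subgroup since $G$ is a group of homeomorphisms), and if the orbit is countable then $X=\bigcup_n(x_n+P_0)$ is covered by countably many cosets of $P_0$.

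The first step is therefore to argue that $P_0=\{z\in X:\mu_z=\mu\}$ is compact. Here is where compactness of $\supp\mu$ enters crucially: if $\mu_z=\mu$ then $z+\supp\mu=\supp\mu$, so $P_0\subseteq\supp\mu-\supp\mu$, which is compact (continuous image of the compact set $\supp\mu\times\supp\mu$ under subtraction); being also closed, $P_0$ is compact. Combined with the previous paragraph, a countable orbit would give $X=\bigcup_n(x_n+P_0)$, a countable union of compact (hence closed) sets. By the Baire category theorem some $x_n+P_0$ has nonempty interior, so $P_0$ has nonempty interior; a closed subgroup with nonempty interior is open, and an open subgroup with compact closure (it is already compact) forces $X$ to be covered by finitely many cosets of $P_0$, hence $X$ is compact — in particular locally compact. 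This contradicts the hypothesis that $X$ is non-locally-compact, so the orbit $G\mu$ must be uncountable.

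I expect the main obstacle to be the verification that $P_0$ is genuinely a closed \emph{subgroup} and that it is compact — both follow cleanly from the displayed facts ($G$ is a group of homeomorphisms, so $P_0$ is the kernel of the orbit map in an appropriate sense and hence a subgroup; $\supp\mu_x=x+\supp\mu$ identifies $P_0$ with a set of periods of a compact set) — together with the clean application of Baire category to pass from ``countably many cosets of $P_0$ cover $X$'' to ``$P_0$ is open'', and then the standard fact that a non-locally-compact Polish group cannot be a countable union of compact sets. None of these steps requires a computation; the argument is essentially a packaging of the Baire category theorem with the observation that compact support bounds the stabilizer.
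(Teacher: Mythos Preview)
Your argument is essentially the paper's, repackaged: the paper writes $G\mu=(\nu_i)$, sets $C_i=\{x:\mu_x=\nu_i\}$ (these are exactly your cosets $x_i+P_0$), applies Baire to find some $C_k$ with nonempty interior, and then observes that a ball contained in $C_k$ forces $B(0,r)\subseteq\supp\mu-\supp\mu$, contradicting non-local-compactness. Your framing via the stabilizer subgroup $P_0$ is a clean way to say the same thing.

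One slip in your final step: it is not true that an open compact subgroup $P_0$ forces $X$ to be covered by \emph{finitely} many cosets of $P_0$ (think of $\mathbb{Z}_p$ inside $\mathbb{Q}_p$). You do not need this claim; once $P_0$ is open and compact it is already a compact neighbourhood of $0$, so $X$ is locally compact directly. The detour through ``$X$ is compact'' is both unnecessary and incorrect. With that correction your proof stands.
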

\begin{proof}
Assume, on the contrary, that $G\mu$ is countable. Write $G\mu=(\nu_i)$
and for every $i$ let
\[ C_i=\{x\in X: \mu_x=\nu_i\}.\]
It is easy to verify that every $C_i$ is closed. As $X=\bigcup_i C_i$ by
the Baire category Theorem there exists a $k$ such that
$\mathrm{Int}(C_k)\neq\varnothing$. So there exist $z\in X$ and $r>0$ such that
$\mu_x=\mu_y=\nu_k$ for every $x,y\in B(z,r)$. But this implies that
$x+\supp\mu=y+\supp\mu$ for every $x,y\in B(z,r)$ and so
\[ B(0,r)\subseteq B(z,r)-B(z,r)\subseteq \supp\mu-\supp\mu,\]
which is impossible as $\supp\mu$ is compact and $X$ is non-locally-compact.
\end{proof}
If $\mu$ is any probability measure, then the orbit $G\mu$ of $\mu$ is Borel.
To see this, observe that the group $X$ acts continuously on $P(X)$ under
the action $x\cdot \mu=\delta_x*\mu$ and that this action is precisely the action
of $G$. It follows by a result of Miller (see \cite{Kechris} or \cite{Mi})
that the orbit $G\mu$ of $\mu$ is Borel. Note, however, that if $\mu$ is a
Dirac measure, then its orbit is the set of all Dirac measures. But, as is well-known,
this is a closed subset of $P(X)$. This fact is generalized in the following
proposition.
\begin{prop} \label{p14}
Let $\mu\in P(X)$ be such that $\supp\mu$ is compact. Then the orbit $G\mu$
of $\mu$ is closed.
\end{prop}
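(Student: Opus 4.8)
The plan is to verify that $G\mu$ contains all of its limit points in $P(X)$. So I would start with a sequence $(\mu_{x_n})_n$ in $G\mu$ converging to some $\nu\in P(X)$, and aim to produce $x\in X$ with $\nu=\mu_x$; it suffices to do this after passing to a subsequence, since $P(X)$ is Hausdorff and a subsequence of $(\mu_{x_n})_n$ must still converge to $\nu$.

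The heart of the matter is to show that $(x_n)_n$ has a convergent subsequence. Since $(\mu_{x_n})_n$ converges, the set $\{\mu_{x_n}:n\in\nn\}\cup\{\nu\}$ is compact in $P(X)$, hence tight by Prokhorov's theorem (recall $X$ is Polish). Fixing a compact set $L\subseteq X$ with $\mu_{x_n}(L)\geq 1/2$ for every $n$ and writing $K=\supp\mu$, we have $\mu(L-x_n)=\mu_{x_n}(L)>0$, and since $\mu(X\setminus K)=0$ this forces $(L-x_n)\cap K\neq\varnothing$. Choose $k_n\in K$ with $x_n+k_n\in L$. As $L$ and $K$ are both compact, after passing twice to a subsequence we may assume $x_n+k_n\to\ell\in L$ and $k_n\to k\in K$; then, the group operations of $X$ being continuous, $x_n=(x_n+k_n)-k_n\to \ell-k=:x$.

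To finish, I would invoke the continuity of the action $x\cdot\mu=\delta_x*\mu$ recorded just before the proposition: the map $y\mapsto\mu_y=\delta_y*\mu$ from $X$ to $P(X)$ is continuous, so along the chosen subsequence $\mu_{x_n}\to\mu_x$. Comparing with $\mu_{x_n}\to\nu$ gives $\nu=\mu_x\in G\mu$, which shows that $G\mu$ is closed.

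The only genuinely delicate point is the relative compactness of $(x_n)_n$: in a non-locally-compact group one cannot pass from boundedness to a convergent subsequence, so the argument must use the tightness of the convergent sequence of measures together with the compactness of $\supp\mu$ to trap each $x_n$ — up to a translate by the bounded quantity $k_n\in K$ — inside a single compact set. Everything else is routine continuity and uniqueness-of-limits bookkeeping.
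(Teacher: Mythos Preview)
Your argument is correct, and it is genuinely different from the paper's. The paper also reduces to showing that $(x_n)$ has a convergent subsequence, but it obtains this by contradiction: assuming $(x_n)$ has a subsequence that is $r$-separated, it invokes a Ramsey-type lemma (Lemma~\ref{l15}) to extract a further subsequence $(y_n)$ with $B(y_i-y_j,r/8)\cap\supp\mu=\varnothing$ for $i<j$, and then uses lower and upper semicontinuity of $\mu\mapsto\mu(U)$ and $\mu\mapsto\mu(F)$ to derive $\nu(V)=0$ and $\nu(F)>0$ for suitable $F\subset V$, a contradiction. By contrast, you go directly: Prokhorov tightness of the convergent sequence produces a single compact set $L$ that is charged by every $\mu_{x_n}$, and since $\supp\mu=K$ is compact you trap $x_n\in L-K$ up to subsequences. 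Your route is shorter and avoids Lemma~\ref{l15} altogether; the paper's route is more self-contained in that it does not quote Prokhorov's theorem, but at the cost of an auxiliary combinatorial lemma and a more delicate semicontinuity computation.
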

To prove Proposition \ref{p14} we will need a certain consequence of the classical
Ramsey Theorem \cite{Ra} for doubletons of $\nn$. As is common in Ramsey Theory,
for every $I\subseteq \nn$ by $[I]^2$ we denote the collection of all doubletons
$(i,j)$ such that $i,j\in I$ and $i<j$.
\begin{lem} \label{l15}
Let $r>0$ and $(x_n)$ be a sequence such that $d(x_n,x_m)\geq r$ for every
$n\neq m$. Then for every $K\subseteq X$ compact, there exists a subsequence $(y_n)$
of $(x_n)$ such that
\[ B(y_i-y_j,r/8)\cap K=\varnothing\]
for every $i,j$ with $i<j$.
\end{lem}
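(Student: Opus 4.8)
The plan is to set up a $2$-coloring of $[\nn]^2$ and apply the infinite Ramsey theorem to extract a monochromatic infinite subsequence of the correct color. First I would colour a doubleton $(i,j)$ with $i<j$ by colour $0$ if $B(x_i-x_j,r/8)\cap K=\varnothing$ and by colour $1$ otherwise. Ramsey's theorem yields an infinite $I\subseteq\nn$ with $[I]^2$ monochromatic; writing $(y_n)$ for the corresponding subsequence of $(x_n)$, it suffices to rule out that the monochromatic colour is $1$, since in the colour-$0$ case the conclusion is exactly what is claimed.

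So suppose, towards a contradiction, that $[I]^2$ has colour $1$: for every $i<j$ in $I$ there is a point $w_{ij}\in K$ with $d(w_{ij},x_i-x_j)<r/8$. The idea is that $K$, being compact, is totally bounded, so we can find two pairs whose associated points $w_{ij}$ are very close, and then translate this back into a contradiction with the separation $d(x_n,x_m)\ge r$. Concretely, cover $K$ by finitely many balls of radius $r/8$; then among the points $w_{i_1 i_2}$ for a strictly increasing sequence $i_1<i_2<i_3<\cdots$ in $I$, infinitely many lie in the same such ball, say $w_{i i_0}$ and $w_{j i_0}$ for some fixed index $i_0\in I$ with $i,j>i_0$ (one should be a little careful to pick the pairs sharing a common coordinate so that the difference telescopes cleanly; alternatively take $w_{i_0 i}$ and $w_{i_0 j}$ for $i,j>i_0$). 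Then by the triangle inequality, using the translation invariance of $d$,
\[
d(x_i,x_j)=d(x_i-x_{i_0},\,x_j-x_{i_0})\le d(x_i-x_{i_0},w_{i_0 i})+d(w_{i_0 i},w_{i_0 j})+d(w_{i_0 j},x_j-x_{i_0})<\tfrac r8+\tfrac r8+\tfrac r8<r,
\]
contradicting $d(x_i,x_j)\ge r$. Hence the monochromatic colour must be $0$, and $(y_n)$ is the desired subsequence.

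The only genuinely delicate point is bookkeeping with the indices: the separation hypothesis is about $d(x_i,x_j)$, i.e. about differences $x_i-x_j$ with both coordinates \emph{varying}, whereas the $r/8$-ball cover of $K$ gives coincidences among the $w$'s indexed by pairs. Fixing the smaller coordinate $i_0$ and letting the larger one range over an infinite subset of $I$ makes the difference $w_{i_0 i}-w_{i_0 j}$ play the role of (an approximation to) $x_i-x_j$, which is exactly a difference of the separated type; that is what I expect to be the main obstacle, and it is resolved simply by choosing the pairs to share their first coordinate before invoking total boundedness. Translation invariance of $d$ (available since $X$ is an abelian Polish group with such a metric) is used to pass from $d(x_i-x_{i_0},x_j-x_{i_0})$ to $d(x_i,x_j)$.
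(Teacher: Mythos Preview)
Your approach is exactly the paper's: a $2$-colouring of $[\nn]^2$, Ramsey, fix the least element of the monochromatic set, pigeonhole the associated points of $K$ into a small ball, and derive $d(x_i,x_j)<r$ by the triangle inequality. Two slips in the execution, however, make your displayed inequality false as written.

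First, a sign error. You defined $w_{i_0 i}$ (for $i_0<i$) by $d(w_{i_0 i},x_{i_0}-x_i)<r/8$, so $w_{i_0 i}$ is close to $x_{i_0}-x_i$, not to $x_i-x_{i_0}$; the first and third terms of your triangle inequality are therefore not bounded by $r/8$. The fix is to start instead from $d(x_i,x_j)=d(-x_i,-x_j)=d(x_{i_0}-x_i,\,x_{i_0}-x_j)$ (inversion is an isometry for a translation-invariant metric on an abelian group) and then insert $w_{i_0 i}$, $w_{i_0 j}$; this is precisely what the paper does.

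Second, a bound error. Two points lying in the same ball of radius $r/8$ are at distance less than $r/4$, not $r/8$, so the middle term contributes $r/4$. The corrected estimate is $r/8+r/4+r/8=r/2<r$, which still yields the contradiction (and matches the paper's computation).
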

\begin{proof}
Let
\[ A=\big\{ (i,j)\in [\nn]^2: B(x_i-x_j,r/8)\cap K=\varnothing\big\}\]
and
\[ B=\big\{ (i,j)\in [\nn]^2: B(x_i-x_j,r/8)\cap K\neq\varnothing\big\}.\]
Then $[\nn]^2=A\cup B$ and $A\cap B=\varnothing$. By Ramsey's Theorem, there exists
$M\subseteq \nn$ infinite such that either $[M]^2\subseteq A$ or $[M]^2\subseteq B$.

We claim that the $[M]^2\subseteq A$. Indeed, assume on the contrary that $[M]^2\subseteq B$.
Set $m=\min\{ i:i\in M\}$ and $M'=M\setminus \{m\}$. Then for every $n\in M'$ we have
\[ B(x_m-x_n,r/8)\cap K\neq \varnothing.\]
So for every $n\in M'$ there exists $w_n\in K$ such that $d(x_m-x_n,w_n)<r/8$. As $K$
is compact, pick a $(z_l)_{l=1}^k$ a finite $r/8$-net of $K$. By cardinality arguments,
we get that there exist an infinite set $I\subseteq M'$ and an $l\in \{1,...,k\}$
such that $d(w_i,z_l)<r/8$ for every $i\in I$. Hence $d(w_i,w_j)<r/4$ for every
$i,j\in I$. But if $i,j\in I$ with $i\neq j$, then we have
\begin{eqnarray*}
d(x_i,x_j) & = & d(-x_i,-x_j)= d(x_m-x_i,x_m-x_j)\\
& \leq & d(x_m-x_i, w_i)+d(w_i,w_j)+ d(w_j,x_m-x_j)< r/2,
\end{eqnarray*}
which contradicts our assumption on $(x_n)$. Hence $[M]^2\subseteq A$.

Now let $(m_n)$ be the increasing enumeration of $M$ and set $y_n=x_{m_n}$ for
every $n$. Clearly $(y_n)$ is the desired sequence.
\end{proof}
We continue with the proof of Proposition \ref{p14}
\begin{proof}[Proof of Proposition \ref{p14}]
First of all observe that we may assume that $0\in\supp\mu$. indeed, if $y\in\supp\mu$,
then set $\nu=\mu_{-y}$ and observe that $0\in\supp\nu$ and that $G\nu=G\mu$. So in what
follows we will assume that $0\in\supp\mu$.

Let $(x_n)$ in $X$ and $\nu\in P(X)$ be such that $g_{x_n}(\mu)=\mu_{x_n}\to\nu$.
Note that it suffices to prove that there exist $x\in X$ and a subsequence $(x_{n_k})$
of $(x_n)$ such that $x_{n_k}\to x$. Indeed, in this case observe that $\delta_{x_{n_k}}
\to \delta_x$. Hence $\mu_{x_{n_k}}=\mu*\delta_{x_{n_k}}\to \mu*\delta_x=\mu_x$.
On the other hand, as $(\mu_{x_{n_k}})$ is a subsequence of $(\mu_{x_n})$ we still
have that $\mu_{x_{n_k}}\to\nu$. This implies that $\nu=\mu_x$ as desired.

Assume, towards a contradiction, that the sequence $(x_n)$ does not contain any
convergent subsequence. So there exist an $r>0$ and a subsequence $(x_{n_k})$
of $(x_n)$ such that $d(x_{n_i}, x_{n_j})\geq r$ for every $i\neq j$.

Applying Lemma \ref{l15} to the sequence $(x_{n_k})$ for $K=\supp\mu$, we get a
subsequence $(y_n)$ of $(x_{n_k})$ (and so of $(x_n)$) such that for every $i,j$
with $j>i$
\[ B(y_i-y_j,r/8)\cap\supp\mu=\varnothing.\]
The subsequence $(\mu_{y_n})$ of $(\mu_{x_n})$, determined by $(y_n)$, still
converges to $\nu$. From the properties of $(y_n)$ we have that if $n>i$, then
\[ \mu_{y_n}\big(B(y_i,r/8)\big)=\mu\big(B(y_i,r/8)-y_n\big)=
\mu\big(B(y_i-y_n,r/8)\big)=0 \]
as $B(y_i-y_n,r/8)\cap\supp\mu=\varnothing$. As for every $U\subseteq X$ open the
function $\mu\to\mu(U)$ is lower semicontinuous, we get
\[ \nu\big(B(y_i,r/8)\big)\leq\liminf \mu_{y_n}\big(B(y_i,r/8)\big)=0\]
for every $i$. Hence the set $V=\bigcup_i B(y_i, r/8)$ is $\nu$-null.

Now set $F=\bigcup_i \overline{B(y_i,r/9)}$. Note that $F$ is closed, as
$d(y_i,y_j)\geq r$ for $i\neq j$, and that $V\supset F$. Then observe that
for every $n$, we have
\[ \mu_{y_n}(F)=\mu(F-y_n)=\mu\Big(\bigcup_i \overline{B(y_i-y_n,r/9)}\Big)
\geq \mu\big(B(0,r/9)\big)>0, \]
where the last inequality holds from the fact that $0\in\supp\mu$. By the upper
semicontinuity of the function $\mu\to\mu(F)$, we get
\[ \nu(F)\geq \limsup \mu_{y_n}(F)\geq \mu\big( B(0,r/9)\big)>0.\]
But this implies that
\[ 0=\nu(V)\geq \nu(F)\geq \mu\big(B(0,r/9)\big)>0\]
and we derive the contradiction.
\end{proof}
\begin{cor} \label{c16}
Let $X$ be a non-locally-compact abelian Polish group and $A\subseteq X$ an analytic
Haar-null set. Then there exists a continuous Borel probability measure $M$ on $P(X)$
such that:
\begin{enumerate}
\item[(i)] $M(T(A))=1$.
\item[(ii)] If $B\subseteq X$ is any other analytic Haar-null set, then either
$M(T(B))=1$ or $M(T(B))=0$.
\end{enumerate}
\end{cor}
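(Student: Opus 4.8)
The plan is to build $M$ as the push-forward of a suitable probability measure on $X$ under one of the maps $g_x$, exploiting the fact that $T(A)$ is $G$-invariant and that we have enough freedom in choosing a test measure of $A$. Since $A$ is Haar-null, fix a test measure $\mu_0\in T(A)$; by the density remark following Definition \ref{d7} (Christensen's argument) we may take $\mu_0$ with $\supp\mu_0$ compact — concretely, a suitable convex combination of Dirac measures can be perturbed into $T(A)$, and convex combinations of finitely many Dirac masses have finite, hence compact, support. Then Lemma \ref{l13} gives that the orbit $G\mu_0$ is uncountable, and Proposition \ref{p14} gives that $G\mu_0$ is closed in $P(X)$; in particular $G\mu_0$, being a closed uncountable (hence perfect-set-containing, by Cantor–Bendixson in the Polish space $P(X)$) Borel set, carries a continuous (atomless) Borel probability measure $M$. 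Since $T(A)$ is $G$-invariant and $\mu_0\in T(A)$, we have $G\mu_0\subseteq T(A)$, so $M(T(A))=M(G\mu_0)=1$, which is (i).

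For (ii), let $B\subseteq X$ be any analytic Haar-null set. The key observation is that $T(B)$ is $G$-invariant (as noted in the text, since $T(B)$ is a face of $P(X)$ invariant under each translation homeomorphism $g_x$), so the set $G\mu_0\cap T(B)$ is itself $G$-invariant inside the orbit $G\mu_0$. Now I use the homogeneity of the orbit: $G$ acts transitively on $G\mu_0$ by homeomorphisms, and $M$ is (up to the identification of $G\mu_0$ with an orbit) "the" natural quasi-invariant measure. More precisely, the plan is to transport the question to $X$: pick $x_0$ with $\mu_0=g_{x_0}(\mu')$ for a base point, identify $G\mu_0$ with the quotient $X/H$ where $H=\{x:\mu_x=\mu\}$ is the (closed) stabilizer, and note that $M$ corresponds to a measure on $X/H$. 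A $G$-invariant subset of $G\mu_0$ pulls back to an $H$-invariant, i.e. $X$-translation-related, structure; the cleanest route is to show directly that $M$ can be taken to satisfy a $0$–$1$ law for $G$-invariant universally measurable subsets of $G\mu_0$. Since $T(B)\cap G\mu_0$ is such a set, we conclude $M(T(B))=M(T(B)\cap G\mu_0)\in\{0,1\}$.

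The $0$–$1$ law I need is the following: if $\nu$ is a Borel probability measure on $X$ whose push-forward under $q:X\to X/H$ we call $M$, and $E\subseteq X/H$ is universally measurable with $q^{-1}(E)$ invariant under left translation by all of $X$ — which it is, since it equals $q^{-1}(E)$ and $E$ is the image of an $X$-invariant set — then $M(E)\in\{0,1\}$ provided $\nu$ was chosen ergodic for the translation action. Because $X$ acts transitively on $X/H$, every translation-invariant subset of $X/H$ has $M$-measure $0$ or $1$ for any translation-quasi-invariant $M$, and in particular for the $M$ we constructed; the transitivity makes ergodicity automatic. So the argument is: choose $M$ on $G\mu_0$ continuous (from perfectness of the closed orbit); $G$-invariance of $T(B)$ plus transitivity of the $G$-action on $G\mu_0$ forces $M(T(B)\cap G\mu_0)\in\{0,1\}$; combine with $M(G\mu_0)=1$.

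The main obstacle I expect is making the $0$–$1$ law precise without circularity: I must verify that the continuous measure $M$ produced from the Cantor–Bendixson perfect subset of $G\mu_0$ is genuinely well-behaved under the $G$-action — e.g. that $G$-invariant universally measurable subsets of the orbit really are $M$-null or $M$-co-null. The honest way is to invoke the structure of the orbit as a homogeneous space $X/H$ with $H$ closed (so $X/H$ is Polish), observe that every universally measurable translation-invariant subset of a transitive Polish $X$-space is meager or comeager (by a Pettis/Baire-category argument, as in Theorem \ref{t4}'s use of openness) and hence — after choosing $M$ to be, say, a continuous measure that is positive on open sets, or after further shrinking to a perfect set on which the induced action is minimal — of measure $0$ or $1$. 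I would therefore state and prove a short lemma giving the $0$–$1$ law for $G$-invariant subsets of a closed single orbit carrying a continuous measure, and then (i) and (ii) follow immediately; this lemma, rather than the construction of $M$, is where the real work lies.
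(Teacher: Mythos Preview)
Your setup for (i) is exactly the paper's: choose a compactly supported $\mu_0\in T(A)$, use Lemma \ref{l13} and Proposition \ref{p14} to see that the orbit $G\mu_0$ is closed and uncountable in $P(X)$, hence supports a continuous probability measure $M$, and conclude $M(T(A))=1$ from $G\mu_0\subseteq T(A)$.

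For (ii), however, you are dramatically overcomplicating matters and worrying about an obstacle that does not exist. You correctly note that $T(B)$ is $G$-invariant and that $G$ acts transitively on $G\mu_0$, but then you launch into stabilizers, quotients $X/H$, quasi-invariance, ergodicity, and a proposed auxiliary $0$--$1$ lemma. None of this is needed. The paper's argument is one line: since $G\mu_0$ is a \emph{single} $G$-orbit and $T(B)$ is $G$-invariant, either $G\mu_0\subseteq T(B)$ (namely when $\mu_0\in T(B)$) or $G\mu_0\cap T(B)=\varnothing$ (when $\mu_0\notin T(B)$). In the first case $M(T(B))\geq M(G\mu_0)=1$; in the second $M(T(B))\leq M(P(X)\setminus G\mu_0)=0$. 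There is no measure-theoretic subtlety here at all: the only $G$-invariant subsets of a transitive $G$-space are $\varnothing$ and the whole space, so your ``$0$--$1$ law'' is a triviality about sets, not about measures. Your concern that $M$ must be ``well-behaved under the $G$-action'' or ``positive on open sets'' is misplaced; \emph{any} probability measure supported on $G\mu_0$ works.

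A minor side remark: your justification for the existence of a compactly supported $\mu_0\in T(A)$ via ``perturbed convex combinations of Dirac measures'' is shaky (there is no reason a test measure should have finite support); the correct reference is Christensen's original argument that one may always take the test measure to be supported on a compact set.
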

\begin{proof}
Pick any compactly supported probability measure $\mu\in T(A)$. By Lemma \ref{l13}
and Proposition \ref{p14}, the orbit $G\mu$ of $\mu$ is an uncountable closed
subset of $P(X)$. So if $M$ is any continuous measure on measures supported in
$G\mu$, then $M(T(A))=1$. Finally, observe that if $B\subseteq X$ is any other analytic
Haar-null set, then as $T(B)$ is $G$-invariant either $G\mu\subseteq T(B)$ or
$G\mu\cap T(B)=\varnothing$. This clearly establishes property (ii).
\end{proof}
\noindent \textbf{Acknowledgments.} I am indebted to Professor S. Papadopoulou for
providing me with references \cite{E1} and \cite{E2}. I would also like to
thank Professor A. S. Kechris for his interest on the paper.


\end{document}